\begin{document}
\setlength{\parindent}{1.5em}
\theoremstyle{plain}
\newtheorem{theorem}{Theorem}[section]
\newtheorem{definition}{Definition}[section]
\newtheorem{lemma}{Lemma}[section]
\newtheorem{Example}{Example}[section]
\newtheorem{cor}{Corollary}[section]
\newtheorem{remark}{Remark}[section]
\newtheorem{property}{Property}[section]
\newtheorem{proposition}{Proposition}[section]


\title{Formation and propagation of singularities in one-dimensional Chaplygin gas}
\maketitle
 \centerline{De-Xing Kong$^{a} $\quad and\quad Changhua Wei$^{b,*}$}
\begin{center}
$^*\!${\it {\small Corresponding author }}\\
$^{a,b}\!${\it {\small  Department of Mathematics, Zhejiang University, Hangzhou 310027, China}}\\
$^{b}\!$
{\it  {\small E-mail address: changhuawei1986@gmail.com}}\\

\end{center}

        \begin{abstract}
        In this paper, we investigate the formation and propagation of singularities for the system for one-dimensional Chaplygin
        gas, which is described by a quasilinear hyperbolic system with linearly degenerate characteristic fields. The phenomena of concentration and the formation of ``$\delta$-shock'' waves are identified and analyzed systematically for this system under suitably large initial data. In contrast to the Rankine-Hogoniot conditions for classical shock, the generalized Rankine-Hogoniot conditions for ``$\delta$-shock'' waves are established. Finally, it is shown that
         the total mass and momentum related to the solution are independent of time.
        \vskip 6mm
        \noindent{\bf Key words and phrases:}  System for Chaplygin gas, linearly degenerate characteristic,
        blowup, singularity, $\delta$-shock.

        \vskip 3mm

        \noindent{\bf 2000 Mathematics Subject Classification}: 35L45, 35L67, 76N15.
        \end{abstract}

\baselineskip=7mm

  \section{Introduction}
As we know, smooth solutions of nonlinear hyperbolic
systems generally exist in finite time even if initial data is
sufficiently smooth and small. After this time, only weak solutions
can be defined. Therefore, the following questions arise naturally: what kinds of singularities will appear at the blowup
point and how do the singularities propagate?

Let us first recall some classical results on this research topic. Consider the Cauchy problem for conservation laws in one dimensional space
$$
\left\{\begin{array}{l}
u_{t}+F(u)_{x}=0 \qquad \text{in}\; \mathbb R\times[0,\infty),\vspace{2mm}\\
u(0,x)=u_{0}(x)\qquad \text{in}\;\mathbb R\times\{t=0\},
\end{array}
\right.
\eqno(1.1)
$$
where $u=(u_{1}(t,x),\cdots,u_{m}(t,x))$ is the unknown vector-valued function, $F:\mathbb R^{m}\rightarrow\mathbb R^{m}$ is a given flux function, $u_{0}:\mathbb R\rightarrow\mathbb R^{m}$
is the initial data which is a given smooth vector-valued function.

 It is well-known that one of classical blowup phenomena for system (1.1) is the formation of shock. The formation of shock for one or several space dimensions of hyperbolic conservation laws is an important research topic, which has attracted many mathematicians and physicists widespread attention (see \cite{A,1,Ch1,Ch2,2,6,9,11,LT,13}). As pointed out by Majda in \cite{M}, singularity of shock is due to the catastrophe of the first order derivatives of system (1.1), while the solution itself is bounded in the domain considered. Thus, the weak solution of the Cauchy problem (1.1) can be defined as follows (see \cite{14}).
\begin{definition}
A bounded measurable function $u=u(t,x)$ is called a weak solution of the Cauchy problem (1.1) with bounded and measurable initial data $u_{0}$, if it holds that
$$
\iint\limits_{t\geq0} (u\phi_{t}+F(u)\phi_{x}) dxdt+\int_{t=0}u_{0}\phi dx=0
\eqno(1.2)
$$
for all $\phi\in C_{0}^{1}$, where $C_{0}^{1}$ is the class of $C^{1}$ functions, which vanish outside of a compact subset.
\end{definition}

In this paper, we shall consider the Cauchy problem for the system for one-dimensional Chaplygin gas
$$
\left\{\begin{array}{l}
{\displaystyle \partial_t\rho+\partial_x(\rho u)=0 },\vspace{2mm}\\
{\displaystyle \partial_t (\rho u)+\partial_x(\rho u^2+p)=0}, \vspace{2mm}\\
\end{array}\right.
\eqno(1.3)
$$
where $ \rho=\rho(t, x)$ and
  $u=u(t, x)$ denote the density and the velocity, respectively, and
$ p(t, x)$ is the pressure which is a function of $ \rho$ given by
$$
p=p_0-\frac{\mu^2}{\rho},
\eqno(1.4)
$$
in which $ p_0 $ and $ \mu $ are two positive constants.

It is easy to check that the system (1.3) is a fully linearly degenerate system. The system (1.3) with the state equation (1.4) was introduced by Chaplygin \cite{C}, Tsian \cite{T} and von K\'{a}rm\'{a}n \cite{K} as a suitable mathematical approximation for calculating the lifting force on a wing of an airplane in aerodynamics. Brenier \cite{B} studied the one dimensional Riemann problems and constructed the solutions with concentration phenomena.

 In this paper, we will illustrate that the formation of singularity is due to the overlapping of linearly degenerate characteristics and the density $\rho$ tends to infinity at the blowup points. Thus, there are ``nonclassical'' solutions in this sense, in contrast to the above classical results, the Cauchy problem for this system of conservation laws does not possess a weak $L^{\infty}$-solution. In order to solve this Cauchy problem in the framework of nonclassical solutions, it is necessary to construct the solution for this Cauchy problem with ``strong singularities''. Fortunately, in the past two decades, $\delta$-shock wave was constructed to describe this phenomenon (see \cite{B,Chen,DS,HF,Li,N,S,Wang,Yang}). Roughly speaking, $\delta$-shock wave is a kind of discontinuity, on which at least one of the state variables may develop an extreme concentration in the form of a weighted Dirac delta function with the discontinuity as its support.

 For completeness, we recall the definition of $\delta$-shock (see \cite{Chen, Yang}).
\begin{definition}
A two-dimensional weighted delta function $w(s)\delta_{\mathbb{S}}$ supported on a smooth curve $\mathbb{S}$ parameterized as $(t(s),\,x(s))\,\,(a\leq s\leq b)$ is defined by
$$
<w(s)\delta_{\mathbb{S}},\phi(t,x)>:=\int^{b}_{a}w(s)\phi(t(s),x(s))\sqrt{(t^{'}(s))^{2}+(x^{'}(s))^{2}}ds
\eqno(1.5)
$$
for all $\phi\in C_{0}^{\infty}$ and $^{'}$ denotes $\frac{d}{ds}$.
\end{definition}
With the above definition, a family of $\delta$-measure solutions of system (1.3) with parameter $t$ can be obtained as
$$
\left\{
\begin{array}{l}
\rho(t,x)=\hat{\rho}(t,x)+w(t)\delta_{\mathbb{S}},\vspace{2mm}\\
u(t,x)=\hat{u}(t,x),
\end{array}\right.
\eqno(1.6)
$$
where the discontinuity
$$\mathbb S=\{(t,x(t)):0\leq t< \infty\}$$ and
$$
\hat{\rho}(x,t)=\rho_{-}+[\rho]\chi(x-x(t)),\quad \hat{u}(t,x)=u_{-}+[u]\chi(x-x(t)),
\eqno(1.7)
$$
in which $\rho_{\pm},u_{\pm}\in L^{\infty}(\mathbb R \times [0,\infty))\cap C^{1}(\mathbb R\times [0,\infty))$, $h:=h_{+}-h_{-}$ denotes the jump of function $h$ across the discontinuity and $\chi(x)$ is the characteristic function that is 0 when $x<0$ and 1 when $x>0$.
\begin{definition}
A pair $(\rho,u)=(\rho(t,x),u(t,x))$ is called a $\delta$-measure solution of system (1.3) in the sense of distributions if there exist a smooth curve $\mathbb S$ and a weighted $w$ such that $\rho$ and $u$, which are defined by (1.6), satisfy
$$
<\rho,\phi_{t}>+<\rho u,\phi_{x}>=0
\eqno(1.8)
$$
and
$$
<\rho u,\phi_{t}>+<\rho u^{2}+p(\rho),\phi_{x}>=0
\eqno(1.9)
$$
for all $\phi\in C^{\infty}_{0}$, where
$$
<\rho,\phi>=\int_{0}^{\infty}\int_{-\infty}^{\infty}\hat{\rho}\phi dxdt+<w\delta_{\mathbb{S}},\phi>,
$$
$$
<\rho u,\phi>=\int_{0}^{\infty}\int_{-\infty}^{\infty}\hat{\rho }\hat{u}\phi dxdt+<w u_{\delta}\delta_{\mathbb{S}},\phi>
$$
and
$$
<\rho u^{2}+p(\rho),\phi>=\int_{0}^{\infty}\int_{-\infty}^{\infty}(\hat{\rho}\hat{u}^{2}+p(\hat{\rho}))\phi dxdt+<wu_{\delta}^{2}w_{\mathbb{S}},\phi>,
$$
where $u_{\delta}=\frac{dx(t)}{dt}$ denotes the propagation speed of the discontinuity.
\end{definition}
\begin{remark}
According to \cite{B}, the pressure $p$ is a nonlinear term of $\rho$ and it should be noticed that the delta measure does not contribute.
\end{remark}

 In fact, this paper continues our recent work \cite{KW}. In \cite{KW}, we systematically analyzed the singularity formed by the overlapping of characteristics (or the degeneracy of strictly hyperbolicity), where we named Delta-like singularity. Moreover, we presented a clear behavior in the neighborhood of the blowup point. Based on this, we further constructed the Delta-like singularity with point-shape and Delta-like singularity with line-shape by classical analysis instead of the measure solutions introduced here. However, for more interesting cusp-type singularity, we got nothing except the blowup behavior. In this paper, by the concept of $\delta$-shock wave type solution we establish the generalized Rankine-Hugonoit condition to describe the relationship among the location, propagation speed of the discontinuity, rate of change of its weights and reassignment of $\rho$ and $u$ on its discontinuity (see Theorem 3.1). In order to understand the physical meaning of $\delta$-shock wave solution, we show the total mass and momentum conservation under suitable assumptions on the state variable $(\rho,u)$ at the infinity (see Theorem 4.1).

The paper is organized as follows. In Section 2, some preliminaries on the linearly degenerate hyperbolic systems are given. For completion, we present some basic facts on the formation of cusp-type singularity, which have been investigated in \cite{KW}. Furthermore, we investigate the mechanism of formation of such a ``nonclassical'' singularity. The local existence and uniqueness of $\delta$-shock wave solution is constructed in Section 3. The physical meaning of $\delta$-shock wave solution is illustrated in Section 4. Section 5 gives some remarks and discussions.

\section{Preliminaries}
Consider the system (1.3) with the state equation (1.4), which describes the motion of
a perfect fluid characterized by the pressure-density relation (known as the Chaplygin or
von K\'{a}rm\'{a}n-Tsien pressure law). This endows the system a highly symmetric structure. This is evident if we adopt the local sound speed
$ c=c(t,x)=[p'(\rho)]^{1/2}$ and the usual mean velocity of the fluid $ u=u(t,x) $ as dependent variables. In this case, the system reads
$$
\partial_{t}U+A(U)\partial_{x}U=0,
\eqno(2.1)
$$
where
\begin{displaymath}
U=\left(\begin{array}{c}c\\u
\end{array}
\right)\quad \rm{and}\quad
A(U)=\left(\begin{array}{cc}u&-c\\-c&u
\end{array}
\right).
\end{displaymath}
Obviously, the eigenvalues of $A(U)$ read
$$
 \lambda_{-}=u-c,\quad\lambda_{+}=u+c.
 \eqno(2.2)
$$
Moreover, it is easy to verify that $\lambda_{\pm}=\lambda_{\pm}(t,x)$ are Riemann invariants.
Under the Riemann invariants, the system (1.3) can be reduced to
 $$\left\{\begin{array}{l}
{\displaystyle \partial_t\lambda_{-}+\lambda_{+}\partial_x\lambda_{-}=0 },\vspace{2mm}\\
{\displaystyle \partial_t\lambda_{+}+\lambda_{-}\partial_x\lambda_{+}=0}. \vspace{2mm}\\
\end{array}\right.
\eqno(2.3)
$$
Consider the Cauchy problem for the system (1.3) with the following initial data
$$
t=0: \rho=\rho_{0}(x),\quad u=u_{0}(x),
\eqno(2.4)
$$
where $ \rho_{0}(x)$  and $ u_{0}(x)$  are two suitably smooth functions with bounded $ C^{2} $ norm.
For consistency, let
$$
\lambda_{\pm}(0,x)=\Lambda_{\pm}(x)\triangleq u_{0}(x)\pm\frac{\mu}{\rho_{0}(x)}.
\eqno(2.5)
$$
Thus, studying the system (1.3) with initial data (2.4) is equivalent to studying the system (2.3) with initial data (2.5) in the existence domain of classical solutions.

In the existence domain of the classical solution of (2.3), we recall the definition of characteristics and denote two characteristics starting from $(0,\alpha)$ by
$$
x=x^{+}(t,\alpha),\quad x=x^{-}(t,\alpha),
$$
respectively, which satisfy
$$\left\{\begin{array}{l}
{\displaystyle \frac{dx^{+}(t,\alpha)}{dt}=\lambda_{-}(t,x^{+}(t,\alpha)) },\vspace{2mm}\\
{\displaystyle t=0:x^{+}(0,\alpha)=\alpha} \vspace{2mm}\\
\end{array}\right.
\eqno(2.6)
$$
and
$$\left\{\begin{array}{l}
{\displaystyle \frac{dx^{-}(t,\alpha)}{dt}=\lambda_{+}(t,x^{-}(t,\alpha))},\vspace{2mm}\\
{\displaystyle t=0:x^{-}(0,\alpha)=\alpha}, \vspace{2mm}\\
\end{array}\right.
\eqno(2.7)
$$ respectively. On the other hand, for any $(t,x)$ in the maximal domain of definition of a smooth solution , we define $\beta(t,x)$ by $\beta(t,x)=x^{+}(0;t,x)$ where $x^{+}(s;t,x)$ is the unique solution of the ODE
$$
\frac{dx^{+}(s;t,x)}{ds}=\lambda_{-}(s,x^{+}(s;t,x))
$$
with initial condition $x^{+}(t;t,x)=x$. $\alpha(t,x)$ is defined similarly.
\begin{lemma}By (2.3),
$\lambda_{+}(t,x)$ is constant along the curve $x=x^{+}(t,\alpha)$, while  $\lambda_{-}(t,x)$ is constant along the curve $x=x^{-}(t,\alpha)$.
\end{lemma}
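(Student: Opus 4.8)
The plan is to verify both assertions by a direct chain-rule computation along the respective characteristic curves, exploiting the particular cross-pairing between the transport speeds in (2.6)--(2.7) and the coefficients appearing in (2.3). The essential structural observation is that although the curve $x=x^{+}(t,\alpha)$ propagates with speed $\lambda_{-}$, it is the \emph{other} Riemann invariant $\lambda_{+}$ that is transported along it, and symmetrically for $x=x^{-}(t,\alpha)$.

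For the first claim I would consider the composite function $t\mapsto\lambda_{+}(t,x^{+}(t,\alpha))$ and differentiate it with respect to $t$. By the chain rule,
$$
\frac{d}{dt}\lambda_{+}(t,x^{+}(t,\alpha))=\partial_{t}\lambda_{+}+\partial_{x}\lambda_{+}\cdot\frac{dx^{+}(t,\alpha)}{dt}.
$$
Substituting the defining ODE (2.6), namely $\tfrac{dx^{+}}{dt}=\lambda_{-}$, the right-hand side becomes $\partial_{t}\lambda_{+}+\lambda_{-}\partial_{x}\lambda_{+}$, which is precisely the left-hand side of the second equation in (2.3) and therefore vanishes identically. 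Hence $\lambda_{+}(t,x^{+}(t,\alpha))$ is independent of $t$, i.e.\ constant along $x=x^{+}(t,\alpha)$.

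The second claim then follows by the same argument with the roles of $+$ and $-$ interchanged: differentiating $t\mapsto\lambda_{-}(t,x^{-}(t,\alpha))$ and inserting the ODE (2.7), $\tfrac{dx^{-}}{dt}=\lambda_{+}$, produces $\partial_{t}\lambda_{-}+\lambda_{+}\partial_{x}\lambda_{-}$, which is the first equation of (2.3) and equals zero. This computation is valid throughout the maximal domain of the classical solution, where all quantities are $C^{1}$ and the characteristics are well defined.

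Since the argument reduces to a one-line application of the chain rule in each case, I do not anticipate any genuine obstacle. The only point requiring care is the bookkeeping of the cross-pairing: one must differentiate $\lambda_{+}$ along the $x^{+}$-curve (whose speed is $\lambda_{-}$) rather than along the $x^{-}$-curve, so that the substituted transport speed coincides with the coefficient in the correct equation of (2.3). Getting this pairing straight is what makes the two terms cancel, and it is the sole conceptual content of the lemma.
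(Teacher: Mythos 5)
Your proof is correct and is precisely the argument the paper intends: the paper states this lemma without proof, treating it as an immediate consequence of (2.3) and the defining ODEs (2.6)--(2.7), and your chain-rule computation with the cross-pairing (the $x^{+}$-curve has speed $\lambda_{-}$ but transports $\lambda_{+}$, and vice versa) is exactly that omitted verification.
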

The following lemma can be found in Kong-Zhang \cite{11}
\begin{lemma}
In terms of characteristic parameters $(\alpha,\beta)$ introduced above, it holds that
$$
t(\alpha,\beta)=\int_\alpha^\beta\frac{1}{\Lambda_{+}(\zeta)-\Lambda_{-}(\zeta)}d\zeta,
\eqno(2.8)
$$
$$
x(\alpha,\beta)=\frac{1}{2}\left\{{\alpha+\beta+\int_\alpha^\beta\frac{\Lambda_{+}(\zeta)+\Lambda_{-}(\zeta)}{\Lambda_{+}(\zeta)-\Lambda_{-}(\zeta)}}d\zeta\right\}.
\eqno(2.9)
$$
\end{lemma}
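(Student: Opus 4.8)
The plan is to treat $(\alpha,\beta)$ as new independent coordinates (a hodograph-type change of variables) and to derive a decoupled system for $t(\alpha,\beta)$ and $x(\alpha,\beta)$. First I would record the two facts that make the computation work. By Lemma 2.1 together with the definitions of $\alpha$ and $\beta$, the Riemann invariant $\lambda_+$ is constant along each curve of constant $\beta$, while $\lambda_-$ is constant along each curve of constant $\alpha$; evaluating at $t=0$, where $\alpha=\beta=x$, yields the pointwise identities $\lambda_+(t,x)=\Lambda_+(\beta)$ and $\lambda_-(t,x)=\Lambda_-(\alpha)$.

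Next, I would translate the characteristic ODEs (2.6)--(2.7) into the new coordinates. Since the $x^+$-curve is exactly a level set $\{\beta=\text{const}\}$ and carries $dx/dt=\lambda_-=\Lambda_-(\alpha)$, moving along it gives $x_\alpha/t_\alpha=\Lambda_-(\alpha)$, that is $x_\alpha=\Lambda_-(\alpha)\,t_\alpha$; likewise, along $\{\alpha=\text{const}\}$ one obtains $x_\beta=\Lambda_+(\beta)\,t_\beta$. These two first-order relations are what I will exploit. Cross-differentiating is the key step: differentiating the first relation in $\beta$ and the second in $\alpha$ and subtracting (using $x_{\alpha\beta}=x_{\beta\alpha}$) produces $(\Lambda_-(\alpha)-\Lambda_+(\beta))\,t_{\alpha\beta}=0$. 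In the domain of the classical solution the system is strictly hyperbolic, since $c=\mu/\rho>0$ forces $\Lambda_+(\beta)=\lambda_+>\lambda_-=\Lambda_-(\alpha)$ at every point; hence the prefactor never vanishes and $t_{\alpha\beta}\equiv 0$, whence also $x_{\alpha\beta}\equiv 0$. Consequently $t=f(\alpha)+g(\beta)$ and $x=p(\alpha)+q(\beta)$ split into sums of single-variable functions.

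Finally I would fix these unknown functions from the initial data. Because every point of $\{t=0\}$ is the common foot of both characteristics through it, the diagonal $\beta=\alpha$ corresponds to $t=0$ with $x=\alpha$; thus $t(\alpha,\alpha)=0$ and $x(\alpha,\alpha)=\alpha$. The first identity forces $g=-f$, so that $t_\alpha=f'(\alpha)$ and $t_\beta=-f'(\beta)$. Substituting into the two first-order relations and imposing $p'(\alpha)+q'(\alpha)=1$, which comes from differentiating $x(\alpha,\alpha)=\alpha$, yields $f'(\zeta)=-1/(\Lambda_+(\zeta)-\Lambda_-(\zeta))$, together with $p'(\zeta)=-\Lambda_-(\zeta)/(\Lambda_+(\zeta)-\Lambda_-(\zeta))$ and $q'(\zeta)=\Lambda_+(\zeta)/(\Lambda_+(\zeta)-\Lambda_-(\zeta))$. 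Integrating from the diagonal gives $t(\alpha,\beta)=\int_\alpha^\beta d\zeta/(\Lambda_+(\zeta)-\Lambda_-(\zeta))$, which is (2.8).

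For the position, I would write $x$ in the two equivalent forms $x=\alpha+\int_\alpha^\beta q'(\zeta)\,d\zeta$ and $x=\beta-\int_\alpha^\beta p'(\zeta)\,d\zeta$, obtained by integrating $x_\beta$ and $x_\alpha$ respectively off the diagonal. Averaging the two and using $q'-p'=(\Lambda_++\Lambda_-)/(\Lambda_+-\Lambda_-)$ gives the symmetric expression (2.9). I expect the only genuine subtlety to be the justification that the cross term vanishes, namely invoking strict hyperbolicity to guarantee $\Lambda_-(\alpha)\neq\Lambda_+(\beta)$ throughout the coordinate domain (equivalently, that the map $(t,x)\mapsto(\alpha,\beta)$ is a diffeomorphism on the region of smooth solutions); once that is secured, the remaining steps are elementary integration and careful bookkeeping of the diagonal conditions.
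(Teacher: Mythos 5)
Your proof is correct, but there is nothing in the paper to compare it against: the paper states this lemma without proof, recalling it from Kong--Zhang \cite{11}, so your derivation is a genuine addition rather than a variant of an in-paper argument. Your route is the standard one for such solution formulas and is sound at every step. The pointwise identities $\lambda_{+}(t,x)=\Lambda_{+}(\beta)$ and $\lambda_{-}(t,x)=\Lambda_{-}(\alpha)$ follow from Lemma 2.1 exactly as you say (constant-$\beta$ curves are the $x^{+}$-characteristics, which carry $\lambda_{+}$; constant-$\alpha$ curves are the $x^{-}$-characteristics, which carry $\lambda_{-}$); the tangency relations $x_{\alpha}=\Lambda_{-}(\alpha)\,t_{\alpha}$ and $x_{\beta}=\Lambda_{+}(\beta)\,t_{\beta}$, the cross-differentiation yielding $\bigl(\Lambda_{-}(\alpha)-\Lambda_{+}(\beta)\bigr)t_{\alpha\beta}=0$, and the diagonal normalizations $t(\alpha,\alpha)=0$, $x(\alpha,\alpha)=\alpha$ then pin down $f'$, $p'$, $q'$ exactly as you computed. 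As a consistency check, differentiating (2.8)--(2.9) gives $t_{\alpha}=-1/(\Lambda_{+}(\alpha)-\Lambda_{-}(\alpha))$, $x_{\alpha}=-\Lambda_{-}(\alpha)/(\Lambda_{+}(\alpha)-\Lambda_{-}(\alpha))$ and $t_{\beta}=1/(\Lambda_{+}(\beta)-\Lambda_{-}(\beta))$, $x_{\beta}=\Lambda_{+}(\beta)/(\Lambda_{+}(\beta)-\Lambda_{-}(\beta))$, which reproduce your two first-order relations, so the signs all cohere. Your identification of the one genuine subtlety is also right: the vanishing of $t_{\alpha\beta}$ requires $\Lambda_{-}(\alpha)\neq\Lambda_{+}(\beta)$ at points where both parameters are evaluated, and this holds precisely because the characteristic parameters are defined only on the existence domain $D(t_{0})$ of the classical solution, where $c=\mu/\rho>0$ forces $\lambda_{-}<\lambda_{+}$ pointwise; this is consistent with the paper's later observation (after Definition 2.2) that $\Lambda_{-}(\alpha)=\Lambda_{+}(\beta)$ characterizes exactly the singular points of the map $\Pi$, i.e.\ the boundary of validity of your argument.
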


The following lemma, which is due to \cite{10}, plays an important role in our discussion.
 \begin{lemma}
  Adopt the above notations, if there exists $\alpha$ such that $\Lambda_{-}(\alpha)\neq\lambda_{+}(t,x^{-}(t,\alpha))$ for $t\geq0$, then it holds that
 $$
 \frac{\partial x^{-}(t,\alpha)}{\partial\alpha}=\frac{\lambda_{+}(t,x^{-}(t,\alpha))-\Lambda_{-}(\alpha)}{\Lambda_{+}(\alpha)-\Lambda_{-}(\alpha)}
 \eqno(2.10)
 $$and
 $$
 \frac{\partial\lambda_{-}(t,x)}{\partial x}\bigg|_{x=x^{-}(t,\alpha)}=\Lambda^{'}_{-}(\alpha)\frac{\Lambda_{+}(\alpha)-\Lambda_{-}(\alpha)}{\lambda_{+}(t,x^{-}(t,\alpha))-\Lambda_{-}(\alpha)}.
 \eqno(2.11)
 $$
 Similar result holds for $x=x^{+}(t,\beta)$ and $\lambda_{+}(t,x^{+}(t,\beta))$.
 \end{lemma}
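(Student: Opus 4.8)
The plan is to establish (2.10) by differentiating the characteristic ODE (2.7) with respect to the initial parameter $\alpha$, and then to read off (2.11) by differentiating the constancy relation for $\lambda_{-}$ supplied by Lemma 2.1. To this end I set $J(t):=\partial x^{-}(t,\alpha)/\partial\alpha$ and $P(t):=\lambda_{+}(t,x^{-}(t,\alpha))-\Lambda_{-}(\alpha)$. Differentiating (2.7) in $\alpha$ gives the homogeneous linear ODE $J'(t)=\partial_x\lambda_{+}(t,x^{-}(t,\alpha))\,J(t)$ with $J(0)=1$.

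The crucial observation is that $P$ obeys the \emph{same} linear equation. Since $\lambda_{-}$ is constant along $x=x^{-}(t,\alpha)$ by Lemma 2.1, with value $\Lambda_{-}(\alpha)$, the second equation in (2.3) yields $\frac{d}{dt}\lambda_{+}(t,x^{-}(t,\alpha))=(\lambda_{+}-\lambda_{-})\partial_x\lambda_{+}=(\lambda_{+}-\Lambda_{-}(\alpha))\partial_x\lambda_{+}$, that is, $P'(t)=\partial_x\lambda_{+}(t,x^{-}(t,\alpha))\,P(t)$. Because $P$ and $J$ satisfy one and the same homogeneous linear ODE, the ratio $P/J$ is independent of $t$; evaluating at $t=0$, where $J(0)=1$ and $P(0)=\Lambda_{+}(\alpha)-\Lambda_{-}(\alpha)$ by (2.5), fixes the constant and gives $P(t)=(\Lambda_{+}(\alpha)-\Lambda_{-}(\alpha))J(t)$, which is precisely (2.10) (note $\Lambda_{+}-\Lambda_{-}=2\mu/\rho_{0}>0$, so the denominator there never vanishes). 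The hypothesis $\Lambda_{-}(\alpha)\neq\lambda_{+}(t,x^{-}(t,\alpha))$, i.e. $P(t)\neq0$, forces $J(t)\neq0$ for $t\geq0$, so the $\lambda_{+}$-characteristics do not collapse and the divisions below are legitimate.

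For (2.11) I would differentiate the identity $\lambda_{-}(t,x^{-}(t,\alpha))=\Lambda_{-}(\alpha)$ (again Lemma 2.1) with respect to $\alpha$ at fixed $t$, obtaining $\partial_x\lambda_{-}(t,x^{-}(t,\alpha))\,J(t)=\Lambda_{-}'(\alpha)$; solving for the $x$-derivative and substituting the value of $J(t)$ from (2.10) produces (2.11). The stated companion identities along $x=x^{+}(t,\beta)$ follow verbatim after the symmetric exchange $\alpha\leftrightarrow\beta$, $\lambda_{-}\leftrightarrow\lambda_{+}$, $\Lambda_{-}\leftrightarrow\Lambda_{+}$ in the two equations of (2.3).

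The computation is short, and the only point genuinely requiring care — the step I regard as the heart of the argument — is recognizing that $P(t)$ and the Jacobian $J(t)$ are governed by the identical linear ODE, since this is exactly what lets one avoid integrating the unknown quantity $\partial_x\lambda_{+}$ along the characteristic. Should one prefer a computational route, an equivalent derivation uses the explicit change of variables (2.8)--(2.9) of Lemma 2.2: writing $x^{-}(t(\alpha,\beta),\alpha)=x(\alpha,\beta)$, differentiating in $\alpha$ at fixed $\beta$, and using $\partial_t x^{-}=\lambda_{+}$ reproduces (2.10) directly from the elementary expressions for $\partial x/\partial\alpha$ and $\partial t/\partial\alpha$.
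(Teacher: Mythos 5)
Your proof is correct. A point worth noting: the paper itself supplies no proof of this lemma --- it is quoted from Kong--Tsuji \cite{10} ("The following lemma, which is due to \cite{10}...") --- so there is no internal argument to compare against; your write-up must therefore stand on its own, and it does. The core step, that $P(t)=\lambda_{+}(t,x^{-}(t,\alpha))-\Lambda_{-}(\alpha)$ and $J(t)=\partial x^{-}(t,\alpha)/\partial\alpha$ obey the \emph{same} linear ODE $y'=\partial_{x}\lambda_{+}\,y$ (using the second equation of (2.3) together with the constancy $\lambda_{-}\equiv\Lambda_{-}(\alpha)$ along $x^{-}$ from Lemma 2.1), with $J(0)=1$ and $P(0)=\Lambda_{+}(\alpha)-\Lambda_{-}(\alpha)$, is exactly the standard derivation of such formulas for $2\times2$ linearly degenerate systems, and (2.11) then follows correctly by differentiating $\lambda_{-}(t,x^{-}(t,\alpha))=\Lambda_{-}(\alpha)$ in $\alpha$. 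One stylistic refinement: rather than saying "the ratio $P/J$ is independent of $t$" (which tacitly presupposes $J\neq0$), it is cleaner to observe that $P$ and $P(0)J$ solve the same linear ODE with the same initial value and hence coincide by uniqueness; within the classical existence domain one also has $J(t)=\exp\bigl(\int_{0}^{t}\partial_{x}\lambda_{+}(s,x^{-}(s,\alpha))\,ds\bigr)>0$, so no circularity can arise, and the hypothesis $P(t)\neq0$ is what licenses the division in (2.11).
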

\begin{remark}
It follows from $(2.11)$ that if there exists time $t_{0}$ which satisfies  $$\lambda_{+}(t_{0},x^{-}(t_{0},\alpha))=\Lambda_{-}(\alpha),\:\: \Lambda_{-}(\alpha)\neq\Lambda_{+}(\alpha)\:\: \text{and} \:\:\Lambda^{'}_{-}(\alpha)\neq0\;\text{for some}\, \alpha,$$then the solution of the Cauchy problem (2.3) with (2.5) must blow up at the time $t_{0}$. By the theory of characteristic method, we observe that $\lambda_{-}(t,x)$ and $\lambda_{+}(t,x)$ are bounded, while $(\lambda_{-})_{x}$ and $(\lambda_{+})_{x}$ tend to the infinity as $t$ goes to $t_{0}$.

 It is well known that the formation of traditional blowup, e.g., the formation of ``shock wave'' is due to the envelope of the same family of characteristics (see \cite{A,9}). However, in this paper, we shall investigate a new phenomenon on the formation of singularities which is based on the envelope of different families of characteristics (see Figure 1).
\end{remark}
\begin{figure}[h]
\centering
\includegraphics[]{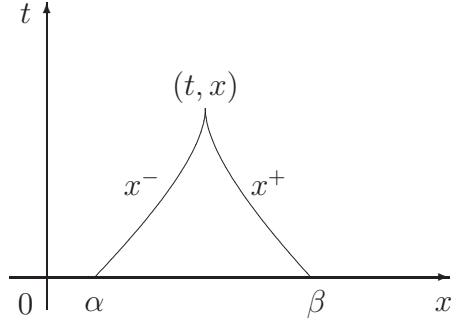}
\caption{The envelope of characteristics of different families }
\end{figure}
To do so, we suppose that the initial data $\Lambda_{-}(x)$ and $\Lambda_{+}(x)$ are suitably smooth
 functions and satisfy the following assumptions:

Assumption (H1):
$$
\Lambda_{-}(x)<\Lambda_{+}(x),\quad \forall\:  x\in \mathbb{R}.
\eqno(2.12)
$$

Assumption (H2):
$$
\Lambda^{'}_{-}(x)<0\quad \text{and}\quad \Lambda^{'}_{+}(x)<0,\qquad\forall\:x\in \mathbb{R}.
\eqno(2.13)
$$
Define
 $$\Sigma=\{(\alpha,\beta)|\alpha<\beta\;\text{and}\;\Lambda_{-}(\alpha)=\Lambda_{+}(\beta)\}.
 \eqno(2.14)$$
  In order to avoid confusion, here and hereafter, we denote the variable of $\Lambda_{-}(x)$ by $\alpha$ and the variable of $\Lambda_{+}(x)$ by $\beta$.

 By (2.13) and (2.14), for $\forall\,(\alpha,\beta)\in\Sigma$, it holds that $\beta(\alpha)=\Lambda_{+}^{-1}\Lambda_{-}(\alpha)$. Define
 $$
 f(\alpha):=\frac{\Lambda^{'}_{-}(\alpha)}{\Lambda_{+}(\beta(\alpha))-\Lambda_{-}(\beta(\alpha))}
-\frac{\Lambda^{'}_{+}(\beta(\alpha))}{\Lambda_{+}(\alpha)-\Lambda_{-}(\alpha)},
\eqno(2.15)
 $$
 where $$\Lambda^{'}_{+}(\beta(\alpha))=\frac{d\Lambda_{+}(\beta)}{d\beta}\bigg|_{\beta=\beta(\alpha)}.$$
 We furthermore assume that there exists $(\alpha_{0},\beta_{0})\in\Sigma$ such that

 Assumption (H3):
$$\Lambda_{-}(\alpha_{0})=\Lambda_{+}(\beta_{0}).
\eqno(2.16)
$$

Assumption (H4):
$$
f(\alpha_{0})=0.
\eqno(2.17)
$$

Assumption (H5):
$$
f^{'}(\alpha_{0})<0.
\eqno(2.18)
$$

For simplicity, without loss of generality, we may suppose that
$$
\Lambda_{-}(\alpha_{0})=\Lambda_{+}(\beta_{0})=0.
\eqno(2.19)
$$
This can be achieved by making a simple translation transform.

For completion of this paper, we recall the following lemmas 2.4-2.9 without proof, which can be found in \cite{KW}.
\begin{lemma}
Initial data set $\{(\Lambda_{+}(x),\Lambda_{-}(x))\}$ satisfying assumptions (H1)-(H5) is not empty.
\end{lemma}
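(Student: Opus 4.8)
The plan is to prove non-emptiness by exhibiting an explicit pair, and the key simplification is to look for data carrying a built-in translation symmetry. Concretely, I would fix a smooth, strictly decreasing profile $\phi:\mathbb{R}\to\mathbb{R}$ and a constant $a>0$, and set $\Lambda_{-}(x)=\phi(x+a)$ and $\Lambda_{+}(x)=\phi(x-a)$. With this ansatz (H2) is immediate since $\Lambda_{\pm}'=\phi'(\,\cdot\,\mp a)<0$, and the level-set condition defining $\Sigma$ collapses to a single affine relation: $\Lambda_{-}(\alpha)=\Lambda_{+}(\beta)$ forces $\phi(\alpha+a)=\phi(\beta-a)$, hence $\beta(\alpha)=\alpha+2a$ by injectivity of $\phi$. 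The gap $\Lambda_{+}(x)-\Lambda_{-}(x)=\phi(x-a)-\phi(x+a)>0$ is automatically positive, giving (H1).

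The second step is to compute $f$ under this ansatz. Substituting $\beta(\alpha)=\alpha+2a$ into (2.15), all four terms are expressible through $\phi$ evaluated at $\alpha-a,\,\alpha+a,\,\alpha+3a$, and the two numerators share the common factor $\phi'(\alpha+a)$. One finds
$$f(\alpha)=\phi'(\alpha+a)\left[\frac{1}{\phi(\alpha+a)-\phi(\alpha+3a)}-\frac{1}{\phi(\alpha-a)-\phi(\alpha+a)}\right].$$
Writing $y=\alpha+a$ and $s=2a$, the bracket vanishes exactly when the symmetric second difference $\phi(y-s)-2\phi(y)+\phi(y+s)$ equals zero, so (H4) becomes a second-difference condition on $\phi$. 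Differentiating and using $f(\alpha_{0})=0$ (which, since $\phi'\neq0$, forces the bracket to vanish) to kill the $\phi''$ contribution, (H5) reduces, after dividing by the negative factor $\phi'(y_{0})$, to the requirement that the second difference of $\phi'$, namely $\phi'(y_{0}-s)-2\phi'(y_{0})+\phi'(y_{0}+s)$, be strictly positive.

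For the final step I would take $\phi(y)=-y+\varepsilon\sin y$ with $\varepsilon\in(-1,0)$ and $a\notin\pi\mathbb{Z}$, and choose $y_{0}=0$, i.e. $\alpha_{0}=-a$, $\beta_{0}=a$. Strict monotonicity holds since $\phi'(y)=-1+\varepsilon\cos y<0$. The linear part contributes nothing to second differences, while the telescoping identities for sine and cosine give $\phi(-s)-2\phi(0)+\phi(s)=2\varepsilon(\cos s-1)\sin 0=0$ (so (H4) holds, and the normalization $\Lambda_{-}(\alpha_{0})=\Lambda_{+}(\beta_{0})=\phi(0)=0$ of (2.19) holds as well), together with $\phi'(-s)-2\phi'(0)+\phi'(s)=2\varepsilon(\cos s-1)>0$ because $\varepsilon<0$ and $\cos s<1$ (so (H5) holds).

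The main obstacle — and the only real content — is reconciling (H4) with (H5): the former forces the discrete second difference of $\phi$ to vanish, which would hold identically for an affine $\phi$, but an affine profile makes $f\equiv 0$ and destroys (H5). One is therefore forced to a genuinely nonlinear yet still monotone profile, and the sine perturbation is tailored precisely so that the second differences of $\phi$ and of $\phi'$ are proportional to $\sin y_{0}$ and $\cos y_{0}$ respectively; these vanish and stay nonzero at the \emph{same} point $y_{0}=0$, decoupling the two requirements. The last routine point is that the oscillation does not spoil (H1) globally: since $\Lambda_{+}(x)-\Lambda_{-}(x)=2a-2\varepsilon\sin a\cos x\geq 2a(1-|\varepsilon|)>0$ by $|\sin a|\leq a$ and $|\cos x|\leq 1$, the gap remains positive for every $x$, completing the verification.
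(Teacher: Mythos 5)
You should note at the outset that this paper never actually proves Lemma 2.4: it is one of the statements quoted without proof from the companion paper \cite{KW}, so your argument can only be judged on its own merits rather than against an in-paper proof. On those merits the verification is correct, and I checked all the computations. Under the translation ansatz $\Lambda_{-}(x)=\phi(x+a)$, $\Lambda_{+}(x)=\phi(x-a)$ one indeed gets $\beta(\alpha)=\alpha+2a$, and (2.15) factors as $f(\alpha)=\phi'(\alpha+a)\,B(\alpha+a)$ with $B(y)=\bigl(\phi(y)-\phi(y+s)\bigr)^{-1}-\bigl(\phi(y-s)-\phi(y)\bigr)^{-1}$, $s=2a$. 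Since $\phi'<0$, (H4) is equivalent to the vanishing of the symmetric second difference of $\phi$ at $y_{0}=\alpha_{0}+a$; and because $B(y_{0})=0$ kills the $\phi''B$ term, $f'(\alpha_{0})=\phi'(y_{0})B'(y_{0})$ with $B'(y_{0})=d^{-2}\bigl(\phi'(y_{0}-s)-2\phi'(y_{0})+\phi'(y_{0}+s)\bigr)$, where $d$ is the common value of the two gaps, so (H5) is equivalent to strict positivity of the second difference of $\phi'$, exactly as you claim. For $\phi(y)=-y+\varepsilon\sin y$ with $\varepsilon\in(-1,0)$, the two second differences are $2\varepsilon(\cos s-1)\sin y$ and $2\varepsilon(\cos s-1)\cos y$, which at $y_{0}=0$ equal $0$ and $2\varepsilon(\cos s-1)>0$ provided $a\notin\pi\mathbb{Z}$; (H1)--(H3) and the normalization (2.19) are immediate. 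So the lemma as literally stated is established.

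The one genuine caveat is that in this paper ``initial data'' is not free: by (2.4)--(2.5), $\Lambda_{\pm}=u_{0}\pm\mu/\rho_{0}$ where $\rho_{0}$ and $u_{0}$ are required to have bounded $C^{2}$ norm. Your example gives $u_{0}(x)=\tfrac12\bigl(\Lambda_{+}(x)+\Lambda_{-}(x)\bigr)=-x+\varepsilon\cos a\sin x$, which is unbounded, so the pair does not arise from admissible $(\rho_{0},u_{0})$. Moreover this is structural rather than an artifact of the sine: within the pure translation ansatz, boundedness and positivity of $\rho_{0}=2\mu/(\Lambda_{+}-\Lambda_{-})$ force the gap $\phi(x-a)-\phi(x+a)$ to be bounded below by some $\delta>0$, and telescoping this inequality gives $\phi(x)-\phi(x+2ka)\geq k\delta$, i.e.\ at least linear decay of $\phi$, hence unbounded $u_{0}$; if instead the gap is allowed to approach zero, then $\rho_{0}$ is unbounded. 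So no choice of $\phi$ in this ansatz produces data compatible with the paper's standing hypotheses. To close this, you would either have to break the exact symmetry (for instance $\Lambda_{\pm}(x)=\pm c+\phi(x\mp a)$ with $c>0$ and $\phi$ bounded, which restores a uniform gap but destroys the identity $\beta(\alpha)=\alpha+2a$ and forces the computation of $f$ to be redone), or argue explicitly that the analysis of Sections 2--3 uses only (H1)--(H5) and bounds on derivatives of $\Lambda_{\pm}$, never boundedness of $u_{0}$ itself.
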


By the existence and uniqueness theorem of a $C^{1}$ solution of the Cauchy problem for quasilinear hyperbolic systems (see \cite{5}), under the assumptions (H1)-(H5), the Cauchy problem (2.3), (2.5) has a unique $C^{1}$ solution $(\lambda_{-}(t,x),\lambda_{+}(t,x))$ in the domain $D(t_{0})\triangleq\{(t,x)|0\leq t<t_{0},-\infty<x<\infty\}$, where $t_{0}$ is just the blowup time, i.e., the life span of the $C^{1}$ solution of the Cauchy problem (2.3), (2.5). Throughout the paper, we refer $D(t_{0})$ as the existence domain of the classical solution.
\begin{lemma}
If there are two points $\alpha_{0}$ and $\beta_{0}$ satisfying (2.16), then the characteristic $x=x^{-}(t,\alpha_{0})$ must intersect $x=x^{+}(t,\beta_{0})$ in finite time, where we assume that the classical solution exists.
\end{lemma}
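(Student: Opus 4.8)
The plan is to transport the problem to the characteristic coordinates $(\alpha,\beta)$ of Lemma 2.2 and to recognize the two given characteristics as the two coordinate lines through the single parameter point $(\alpha_{0},\beta_{0})$; the crossing time can then be read off directly from the explicit formula (2.8) and shown to be finite via (H1).

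First I would record the form of the Riemann invariants in these coordinates. Since $\alpha(t,x)=x^{-}(0;t,x)$ and $\beta(t,x)=x^{+}(0;t,x)$, and since by Lemma 2.1 $\lambda_{-}$ is constant along every $x^{-}$-curve while $\lambda_{+}$ is constant along every $x^{+}$-curve, evaluating at the feet $t=0$ and invoking (2.5) yields
$$
\lambda_{-}(t,x)=\Lambda_{-}(\alpha(t,x)),\qquad \lambda_{+}(t,x)=\Lambda_{+}(\beta(t,x))
$$
throughout the existence domain of the classical solution. Now along $x=x^{-}(t,\alpha_{0})$ the invariant $\lambda_{-}$ retains its initial value $\Lambda_{-}(\alpha_{0})$; as $\Lambda_{-}$ is strictly decreasing by (H2), hence injective, the first identity pins the coordinate $\alpha$ to $\alpha_{0}$ along the whole curve, so that $x=x^{-}(t,\alpha_{0})$ is precisely the coordinate line $\{\alpha=\alpha_{0}\}$. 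Symmetrically, using the injectivity of $\Lambda_{+}$, the curve $x=x^{+}(t,\beta_{0})$ is the coordinate line $\{\beta=\beta_{0}\}$.

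Next I would locate the crossing and estimate its time. In the $(\alpha,\beta)$ half-plane the lines $\{\alpha=\alpha_{0}\}$ and $\{\beta=\beta_{0}\}$ meet at the unique point $(\alpha_{0},\beta_{0})$, which is admissible: by (2.16) and (H1) one has $\Lambda_{+}(\beta_{0})=\Lambda_{-}(\alpha_{0})<\Lambda_{+}(\alpha_{0})$, and the monotonicity (H2) of $\Lambda_{+}$ then forces $\alpha_{0}<\beta_{0}$. By the coordinate identification above, a physical point lies on both characteristics exactly when its characteristic coordinates equal $(\alpha_{0},\beta_{0})$; hence the two curves do intersect, and by (2.8) the intersection occurs at time
$$
t^{*}=t(\alpha_{0},\beta_{0})=\int_{\alpha_{0}}^{\beta_{0}}\frac{1}{\Lambda_{+}(\zeta)-\Lambda_{-}(\zeta)}\,d\zeta .
$$
By (H1) the denominator $\Lambda_{+}-\Lambda_{-}$ is continuous and strictly positive on the compact interval $[\alpha_{0},\beta_{0}]$, so it is bounded below by a positive constant and the integral is finite; this is the desired finite crossing time.

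The step requiring the most care is the identification of each physical characteristic with a coordinate line, together with the guarantee that the representation (2.8) remains valid up to the crossing. This is exactly where (H2) enters decisively (monotonicity supplies the injectivity that pins down $\alpha_{0}$ and $\beta_{0}$) and where the standing assumption that the classical solution exists is used, ensuring that $(\alpha,\beta)\mapsto(t,x)$ is a genuine change of variables on the open region preceding the crossing. I note that computing $t^{*}$ explicitly sidesteps any delicate monotonicity or comparison estimate for the gap $x^{+}(t,\beta_{0})-x^{-}(t,\alpha_{0})$, and that the crossing so produced is an intersection of characteristics of \emph{different} families, consistent with the new envelope mechanism illustrated in Figure 1.
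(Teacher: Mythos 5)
Your route is the one the paper intends, but note first that the paper itself never proves this lemma: it is among Lemmas 2.4--2.9 that are explicitly ``recalled \dots without proof'' from \cite{KW}, so the only internal check available is consistency with the surrounding framework --- and your proof passes that check exactly. The crossing point you construct is $\Pi(\alpha_{0},\beta_{0})$, whose time and position are precisely the quantities $t_{0}$ and $x_{0}$ that the paper introduces in (2.20)--(2.21) immediately after the lemma; your finiteness argument (strict positivity of the continuous function $\Lambda_{+}-\Lambda_{-}$ on the compact interval $[\alpha_{0},\beta_{0}]$, from (H1)) is the right one, as is your derivation of $\alpha_{0}<\beta_{0}$ from (2.16), (H1) and the monotonicity (H2).

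One step deserves tightening: the inference ``a point lies on both characteristics exactly when its coordinates are $(\alpha_{0},\beta_{0})$; \emph{hence} the two curves do intersect.'' Existence of a physical point with those coordinates is not automatic, because (2.16) says precisely that $(\alpha_{0},\beta_{0})$ is a \emph{singular} point of $\Pi$ (see the sentence following Definition 2.4): the crossing occurs at the blowup point itself, where the classical solution ceases and $(\alpha,\beta)\mapsto(t,x)$ is no longer a change of variables --- indeed the two characteristics meet tangentially there, since at the meeting point both speeds equal $\Lambda_{-}(\alpha_{0})=\Lambda_{+}(\beta_{0})$ (cf.\ Remark 2.2). So the intersection must be produced as a limit, not located inside the open region on which your coordinate identification is valid; your closing remark gestures at this but does not carry it out. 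The repair is one line: since $t_{\beta}(\alpha_{0},\beta)=\bigl(\Lambda_{+}(\beta)-\Lambda_{-}(\beta)\bigr)^{-1}>0$ on $[\alpha_{0},\beta_{0}]$, the explicit formulas (2.8)--(2.9) map the segment $\{\alpha_{0}\}\times[\alpha_{0},\beta_{0})$ onto the characteristic $x=x^{-}(t,\alpha_{0})$ parametrized by $t\in[0,t^{*})$, and they extend continuously to $\beta=\beta_{0}$; symmetrically, $[\alpha_{0},\beta_{0})\times\{\beta_{0}\}$ parametrizes $x=x^{+}(t,\beta_{0})$ and extends continuously to $\alpha=\alpha_{0}$. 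Hence both curves extend continuously to the common point $\Pi(\alpha_{0},\beta_{0})$ at the finite time $t^{*}=t_{0}$, which is the assertion of the lemma. With that limit made explicit, your proof is complete.
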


In what follows, under the assumptions (H1)-(H5), we first consider the Cauchy problem (2.3), (2.5).

Assume that $(\alpha_{0},\beta_{0})$ satisfies the assumptions (H1)-(H5). We introduce
$$
t_{0}=\int_{\alpha_{0}}^{\beta_{0}}\frac{1}{\Lambda_{+}(\zeta)-\Lambda_{-}(\zeta)}d\zeta
\eqno(2.20)
$$
and
$$
x_{0}=\frac{1}{2}\left\{\alpha_{0}+\beta_{0}+\int_{\alpha_{0}}^{\beta_{0}}\frac{\Lambda_{+}(\zeta)+\Lambda_{-}(\zeta)}{\Lambda_{+}(\zeta)-\Lambda_{-}(\zeta)}d\zeta\right\}.
\eqno(2.21)
$$
%
%
%

\begin{lemma}
There exists a positive constant $\epsilon$ such that $\alpha_{0}$ is the unique zero point of $f(\alpha)$, namely, $$f(\alpha_{0})=0\:\: \rm{but}\:\:f(\alpha)\neq0\:\:\quad\rm{for}\:\:\alpha\in(\alpha_{0}-\epsilon,\alpha_{0}+\epsilon).$$
\end{lemma}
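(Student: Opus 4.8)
The plan is to show that $\alpha_{0}$ is an isolated zero of $f$ using only the local behaviour of $f$ near $\alpha_{0}$, via the sign conditions (H4) and (H5). The guiding principle is elementary: a $C^{1}$ function whose derivative is strictly negative at a point is strictly decreasing in a full neighbourhood of that point, hence injective there, and therefore attains the value $0$ at most once in that neighbourhood.

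First I would establish that $f$ is of class $C^{1}$ near $\alpha_{0}$, so that the strict sign of $f'$ at $\alpha_{0}$ propagates to a neighbourhood by continuity. This reduces to three routine verifications. (i) The map $\beta(\alpha)=\Lambda_{+}^{-1}\Lambda_{-}(\alpha)$ is well defined and $C^{1}$ near $\alpha_{0}$: by (H2) one has $\Lambda_{+}'<0$, so $\Lambda_{+}$ is a strictly monotone $C^{2}$ function whose inverse is $C^{1}$ with nonvanishing derivative by the inverse function theorem; composing with the $C^{2}$ function $\Lambda_{-}$ yields $\beta\in C^{1}$. (ii) The two denominators appearing in (2.15), namely $\Lambda_{+}(\alpha)-\Lambda_{-}(\alpha)$ and $\Lambda_{+}(\beta(\alpha))-\Lambda_{-}(\beta(\alpha))$, stay bounded away from zero near $\alpha_{0}$, which is immediate from (H1). (iii) Since $\rho_{0},u_{0}$ have bounded $C^{2}$ norm, the functions $\Lambda_{\pm}$ are $C^{2}$ and their derivatives $\Lambda_{\pm}'$ are $C^{1}$; combined with (i) and (ii), each quotient in (2.15) is a $C^{1}$ function of $\alpha$, so $f\in C^{1}$ near $\alpha_{0}$.

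Granting $f\in C^{1}$, the conclusion is immediate. By (H5) we have $f'(\alpha_{0})<0$, so by continuity of $f'$ there is an $\epsilon>0$ with $f'(\alpha)<0$ for all $\alpha\in(\alpha_{0}-\epsilon,\alpha_{0}+\epsilon)$. Hence $f$ is strictly decreasing, and in particular injective, on this interval; since $f(\alpha_{0})=0$ by (H4), we conclude $f(\alpha)\neq0$ for every $\alpha\in(\alpha_{0}-\epsilon,\alpha_{0}+\epsilon)$ with $\alpha\neq\alpha_{0}$, which is the desired isolation.

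I do not expect any genuine difficulty here; the entire content lies in the regularity bookkeeping of the first step. The only points deserving attention are that $\beta(\alpha)$ is smooth — which rests squarely on the strict monotonicity $\Lambda_{+}'<0$ guaranteed by (H2) — and that the denominators in (2.15) do not vanish — which rests on (H1). Once these structural assumptions secure $f\in C^{1}$, the isolation of the zero follows from the strict inequality $f'(\alpha_{0})<0$ alone.
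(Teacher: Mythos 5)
The paper itself offers no proof of this lemma: it is recalled, together with Lemmas 2.4--2.9, from the companion paper \cite{KW} (``we recall the following lemmas 2.4--2.9 without proof''), so there is no in-paper argument to compare against. Your proof is correct and is the natural one: the regularity bookkeeping (inverse function theorem for $\Lambda_{+}$ via (H2), denominators bounded away from zero via (H1), $C^{2}$ regularity of $\Lambda_{\pm}$ from the hypotheses on $\rho_{0},u_{0}$) legitimately yields $f\in C^{1}$ near $\alpha_{0}$, and then (H4)--(H5) isolate the zero. Two small remarks. First, the well-definedness of $\beta(\alpha)=\Lambda_{+}^{-1}\Lambda_{-}(\alpha)$ near $\alpha_{0}$ also uses (H3): it is the identity $\Lambda_{-}(\alpha_{0})=\Lambda_{+}(\beta_{0})$ that places $\Lambda_{-}(\alpha_{0})$ in the range of $\Lambda_{+}$, after which continuity keeps $\Lambda_{-}(\alpha)$ in that (open) range for $\alpha$ close to $\alpha_{0}$; you attribute this only to (H2). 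Second, the isolation of the zero needs less than you invoke: differentiability of $f$ at the single point $\alpha_{0}$ (already implicit in the statement of (H5)) gives $f(\alpha)/(\alpha-\alpha_{0})\rightarrow f'(\alpha_{0})<0$, hence $f(\alpha)\neq 0$ for $0<|\alpha-\alpha_{0}|$ small, with no appeal to continuity of $f'$ or to monotonicity on a neighbourhood. Your $C^{1}$ route is perfectly valid, and has the side benefit of giving strict monotonicity of $f$ near $\alpha_{0}$, but it is slightly stronger machinery than the statement requires.
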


It is obvious that (2.8)-(2.9) define a mapping from the region $U\triangleq\{(\alpha,\beta)\mid\alpha\leq\beta\}$ to the domain $\{(t,x)\mid t\geq0,x\in \mathbb{R}\}$. Denote it by $\Pi:$
$$
\Pi(\alpha,\beta)=(t(\alpha,\beta),x(\alpha,\beta)).
\eqno(2.22)
$$
We introduce the Jacobian matrix of $\Pi$
$$
\mathbf{\bigtriangleup(\alpha,\beta)}=
\left(\begin{array} {cc}
t_{\alpha}&t_{\beta}\\
x_{\alpha}&x_{\beta} \end{array}
\right)
\eqno(2.23)
$$\\
and its Jacobian
$$
J(\alpha,\beta) = t_{\alpha}x_{\beta} - t_{\beta}x_{\alpha}.
\eqno(2.24)
$$
Now, we can state the following definitions, which can be found in \cite{4}.
\begin{definition}
A point \textbf{p} in $U$ is called a regular point of the mapping $\Pi$
if the rank $\vartriangle$ is 2 at \textbf{p}. Otherwise, \textbf{p} is called a singular point of $\Pi$.
\end{definition}
It is easy to verify that $\textbf{p}$ is a singular point is equivalent to $\Lambda_{-}(\alpha)=\Lambda_{+}(\beta)$, which can form a smooth curve defined by an explicit function $\beta=\beta(\alpha)$, since $\Lambda_{+}^{'}(\beta)<0$.
\begin{definition}
Let \textbf{p} be a singular point of $\Pi$ and $\Upsilon(\alpha) = (\alpha,\beta(\alpha))$ be the parametric equation with $\Upsilon(\alpha_{0})=\textbf{p}$ for $J(\alpha,\beta)=0$. \textbf{p} is called a fold point of $\Pi$, if $\frac{d}{d\alpha} (\Pi\circ\Upsilon)(\alpha_{0}) \neq(0,0)$, and \textbf{p} is called a cusp point of $\Pi$, if $\frac{d}{d\alpha} (\Pi\circ\Upsilon)(\alpha_{0}) = (0,0)$ but $\frac{d^{2}}{d\alpha^{2}}(\Pi\circ\Upsilon)(\alpha_{0}) \neq (0,0).$
\end{definition}
 \begin{lemma}
 (A) The curve $\beta=\beta(\alpha)$ is strictly increasing as a function of $\alpha$;
 (B) the singular points $(\alpha,\beta)\neq(\alpha_{0},\beta_{0})$ are fold points, while $(\alpha_{0},\beta_{0})$ is a cusp point.
 \end{lemma}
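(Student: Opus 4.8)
The plan is to carry out everything in the characteristic coordinates $(\alpha,\beta)$, using the explicit formulas (2.8)--(2.9) for the mapping $\Pi$ of (2.22). First I would differentiate these formulas to obtain the entries of the Jacobian matrix (2.23). Abbreviating $\Delta(\zeta)=\Lambda_{+}(\zeta)-\Lambda_{-}(\zeta)$, which is positive by (H1), a direct computation yields
$$
t_{\alpha}=-\frac{1}{\Delta(\alpha)},\quad t_{\beta}=\frac{1}{\Delta(\beta)},\quad x_{\alpha}=-\frac{\Lambda_{-}(\alpha)}{\Delta(\alpha)},\quad x_{\beta}=\frac{\Lambda_{+}(\beta)}{\Delta(\beta)},
$$
whence the Jacobian of (2.24) equals $J(\alpha,\beta)=\dfrac{\Lambda_{-}(\alpha)-\Lambda_{+}(\beta)}{\Delta(\alpha)\Delta(\beta)}$, which vanishes exactly when $\Lambda_{-}(\alpha)=\Lambda_{+}(\beta)$ and so recovers the description of the singular set. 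For part (A) I would differentiate the defining relation $\Lambda_{-}(\alpha)=\Lambda_{+}(\beta(\alpha))$ implicitly, obtaining $\beta'(\alpha)=\Lambda_{-}'(\alpha)/\Lambda_{+}'(\beta(\alpha))$; by (H2) both numerator and denominator are strictly negative, hence $\beta'(\alpha)>0$ and the singular curve $\beta=\beta(\alpha)$ is strictly increasing.

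For part (B) the decisive step is to compute the velocity vector of the image curve $\Pi\circ\Upsilon$ along the singular set $\Sigma$. Applying the chain rule to $t(\alpha,\beta(\alpha))$ and $x(\alpha,\beta(\alpha))$, substituting $\beta'(\alpha)$ from above, and then using the relation $\Lambda_{+}(\beta)=\Lambda_{-}(\alpha)$ valid on $\Sigma$ to simplify the $x$-component, I expect both entries to collapse onto a common scalar factor and produce the clean identity
$$
\frac{d}{d\alpha}(\Pi\circ\Upsilon)(\alpha)=\frac{f(\alpha)}{\Lambda_{+}'(\beta(\alpha))}\,\bigl(1,\ \Lambda_{-}(\alpha)\bigr),
$$
where $f$ is precisely the function of (2.15). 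The point of this factorization is that the scalar prefactor carries all of the degeneracy: since $\Lambda_{+}'\neq0$ and the vector $\bigl(1,\Lambda_{-}(\alpha)\bigr)$ never vanishes, the velocity is zero if and only if $f(\alpha)=0$.

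With this identity the classification follows at once. By the lemma asserting that $\alpha_{0}$ is the unique zero of $f$ in a punctured neighbourhood, every singular point $(\alpha,\beta(\alpha))\neq(\alpha_{0},\beta_{0})$ nearby has $f(\alpha)\neq0$, so its velocity vector is nonzero and it is a fold point in the sense of the definition recalled above. At $(\alpha_{0},\beta_{0})$ the hypothesis (H4) gives $f(\alpha_{0})=0$, so the velocity vanishes and I must pass to the second derivative. Writing $g(\alpha)=f(\alpha)/\Lambda_{+}'(\beta(\alpha))$ and differentiating $g(\alpha)\bigl(1,\Lambda_{-}(\alpha)\bigr)$ once more, every term carrying the factor $g(\alpha_{0})=0$ drops out and I am left with
$$
\frac{d^{2}}{d\alpha^{2}}(\Pi\circ\Upsilon)(\alpha_{0})=\frac{f'(\alpha_{0})}{\Lambda_{+}'(\beta_{0})}\,\bigl(1,\ \Lambda_{-}(\alpha_{0})\bigr).
$$
Its first component $f'(\alpha_{0})/\Lambda_{+}'(\beta_{0})$ is strictly positive by (H5) and (H2), in particular nonzero, so the second derivative does not vanish and $(\alpha_{0},\beta_{0})$ is a cusp point.

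The step I expect to be the main obstacle is the bookkeeping that produces the factored velocity vector: one must combine the chain-rule terms correctly and invoke $\Lambda_{+}(\beta)=\Lambda_{-}(\alpha)$ on $\Sigma$ so that the $x$-component becomes exactly $\Lambda_{-}(\alpha)$ times the $t$-component, which is what exposes the common factor $f(\alpha)$ and links the whole argument to assumptions (H4) and (H5). Once that algebraic identity is in place the remainder is a routine application of the hypotheses, with the normalization (2.19) serving only to tidy the expressions at the cusp.
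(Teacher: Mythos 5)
Your proposal is correct: the Jacobian entries, the formula $J(\alpha,\beta)=\bigl(\Lambda_{-}(\alpha)-\Lambda_{+}(\beta)\bigr)/\bigl(\Delta(\alpha)\Delta(\beta)\bigr)$, the factorization $\frac{d}{d\alpha}(\Pi\circ\Upsilon)(\alpha)=\frac{f(\alpha)}{\Lambda_{+}'(\beta(\alpha))}\bigl(1,\Lambda_{-}(\alpha)\bigr)$ along the singular curve, and the second-derivative computation at $\alpha_{0}$ all check out, and together with Lemma 2.6, (H4), (H5) and Definition 2.2 they yield exactly the stated classification. The paper itself omits the proof (deferring to the companion paper [KW]), but your argument is precisely the one its setup — the function $f$ of (2.15), the uniqueness of its zero, and the fold/cusp definitions — is designed to support, so this matches the intended approach.
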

\begin{lemma}
 Under the assumptions (H1)-(H5), $t_{0}$ is the unique minimum point on the interval $(\alpha_{0}-\epsilon,\alpha_{0}+\epsilon)$, where $\epsilon$ is given in Lemma 2.6.
\end{lemma}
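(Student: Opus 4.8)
The statement concerns the behaviour of the time coordinate along the singular curve $\beta=\beta(\alpha)$ determined by $\Lambda_-(\alpha)=\Lambda_+(\beta(\alpha))$. The plan is to introduce the one-variable function
$$
T(\alpha):=t(\alpha,\beta(\alpha))=\int_{\alpha}^{\beta(\alpha)}\frac{d\zeta}{\Lambda_+(\zeta)-\Lambda_-(\zeta)},
$$
obtained by restricting $t(\alpha,\beta)$ in (2.8) to the singular set (2.14), and to show that $T$ attains its unique minimum at $\alpha=\alpha_0$ on the interval $(\alpha_0-\epsilon,\alpha_0+\epsilon)$. Since $\Lambda_-(\alpha_0)=\Lambda_+(\beta_0)$ forces $\beta(\alpha_0)=\beta_0$, comparison with (2.20) gives $T(\alpha_0)=t_0$, so the minimal value is exactly $t_0$, which is the desired conclusion.

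First I would compute $T'(\alpha)$. By the Leibniz rule, the integrand being smooth because $\Lambda_+(\zeta)-\Lambda_-(\zeta)>0$ on $\mathbb{R}$ (Assumption (H1)),
$$
T'(\alpha)=\frac{\beta'(\alpha)}{\Lambda_+(\beta(\alpha))-\Lambda_-(\beta(\alpha))}-\frac{1}{\Lambda_+(\alpha)-\Lambda_-(\alpha)}.
$$
Differentiating the defining identity $\Lambda_-(\alpha)=\Lambda_+(\beta(\alpha))$ yields $\Lambda_-'(\alpha)=\Lambda_+'(\beta(\alpha))\,\beta'(\alpha)$, so that $\beta'(\alpha)=\Lambda_-'(\alpha)/\Lambda_+'(\beta(\alpha))$; by (H2) both derivatives are negative, whence $\beta'(\alpha)>0$, in agreement with Lemma 2.8(A). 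Substituting this expression for $\beta'(\alpha)$ and comparing with the definition (2.15) of $f$, I expect the clean identity
$$
T'(\alpha)=\frac{f(\alpha)}{\Lambda_+'(\beta(\alpha))}.
$$

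It then remains to analyse the sign of $T'$. Because $\Lambda_+'(\beta(\alpha))<0$ by (H2), the last display shows that $T'(\alpha)$ and $f(\alpha)$ have opposite signs throughout $(\alpha_0-\epsilon,\alpha_0+\epsilon)$. By Lemma 2.6, $\alpha_0$ is the only zero of $f$ on this interval, and by (H4)--(H5) we have $f(\alpha_0)=0$ with $f'(\alpha_0)<0$; hence $f>0$ on $(\alpha_0-\epsilon,\alpha_0)$ and $f<0$ on $(\alpha_0,\alpha_0+\epsilon)$. Consequently $T'(\alpha)<0$ for $\alpha<\alpha_0$ and $T'(\alpha)>0$ for $\alpha>\alpha_0$, so $T$ is strictly decreasing on $(\alpha_0-\epsilon,\alpha_0)$ and strictly increasing on $(\alpha_0,\alpha_0+\epsilon)$. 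Therefore $\alpha_0$ is the unique minimum point of $T$ on the interval, with minimal value $T(\alpha_0)=t_0$, which completes the proof.

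No genuine obstacle is anticipated: the only step requiring care is the algebraic reduction to $T'(\alpha)=f(\alpha)/\Lambda_+'(\beta(\alpha))$, where one must correctly carry the factor $\Lambda_+'(\beta(\alpha))$ coming from $\beta'(\alpha)$. Once this identity is in hand, the monotonicity, and hence the uniqueness and location of the minimum, follows immediately from the sign information on $f$ supplied by (H2), (H5) and Lemma 2.6.
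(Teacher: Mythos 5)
The paper itself contains no proof of this statement: it is Lemma~2.8, and the text explicitly says that Lemmas~2.4--2.9 are ``recalled without proof'' from the companion paper \cite{KW}, so there is no in-paper argument to compare against. Judged on its own merits, your proof is correct and complete. The Leibniz computation gives
$T'(\alpha)=\frac{\beta'(\alpha)}{\Lambda_{+}(\beta(\alpha))-\Lambda_{-}(\beta(\alpha))}-\frac{1}{\Lambda_{+}(\alpha)-\Lambda_{-}(\alpha)}$,
and substituting $\beta'(\alpha)=\Lambda_{-}'(\alpha)/\Lambda_{+}'(\beta(\alpha))$ (valid since $\Lambda_{+}'<0$ by (H2)) does reduce this exactly to $f(\alpha)/\Lambda_{+}'(\beta(\alpha))$ with $f$ as in (2.15); I verified the algebra. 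The sign argument is also sound: $f'(\alpha_{0})<0$ from (H5) fixes the sign of $f$ on each side of $\alpha_{0}$ near $\alpha_{0}$, and Lemma~2.6 (no other zeros on $(\alpha_{0}-\epsilon,\alpha_{0}+\epsilon)$) plus continuity propagates these signs to the whole interval, so $T$ strictly decreases on $(\alpha_{0}-\epsilon,\alpha_{0})$ and strictly increases on $(\alpha_{0},\alpha_{0}+\epsilon)$, with $T(\alpha_{0})=t_{0}$ by (2.20) and the injectivity of $\Lambda_{+}$. This is almost certainly the intended argument: the function $f$ in (2.15) is manifestly designed so that its zeros are precisely the critical points of $t$ restricted to the singular curve, which is also consistent with the cusp condition $\frac{d}{d\alpha}(\Pi\circ\Upsilon)(\alpha_{0})=(0,0)$ in Lemma~2.7(B). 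Two trivial slips: your citation ``Lemma 2.8(A)'' for the monotonicity of $\beta(\alpha)$ should be Lemma~2.7(A) in the paper's numbering, and it would be worth one sentence noting that $\beta(\alpha)=\Lambda_{+}^{-1}(\Lambda_{-}(\alpha))$ is $C^{1}$ (implicit function theorem, using $\Lambda_{+}'<0$) so that the differentiation is legitimate; neither affects the validity of the proof.
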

We next discuss the position and property of $\Pi(\alpha,\beta(\alpha))$ in the $(t,x)$-plane.

Introduce $\Upsilon_{l}$ as the graph of the curve $\beta=\beta(\alpha)$ with domain $(\alpha_{0}-\epsilon,\alpha_{0})$ and $\Upsilon_{r}$ as the graph of the curve $\beta=\beta(\alpha)$ with domain $(\alpha_{0},\alpha_{0}+\epsilon)$. Then we define $\Gamma_{l}=\Pi(\Upsilon_{l})\;\text{and}\;\Gamma_{r}=\Pi(\Upsilon_{r})$.
We have the following lemma.
\begin{lemma}Under the assumptions (H1)-(H5), $\Gamma_{l}$ and $\Gamma_{r}$ form a smooth curve in (t,x)-plane which can be defined by an explicit function $t=t(x)$, moreover, $\Gamma_{l}$ is increasing and concave with respect to $x$, $\Gamma_{r}$ is decreasing and concave with respect to $x$.
\end{lemma}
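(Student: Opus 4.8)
The plan is to parametrize each branch by $\alpha$ and reduce every claim to the sign of $f$ and of $\Lambda_-$. Write $T(\alpha)=t(\alpha,\beta(\alpha))$ and $X(\alpha)=x(\alpha,\beta(\alpha))$, so that $\Gamma_l$ and $\Gamma_r$ are the images of $(T,X)$ over $(\alpha_0-\epsilon,\alpha_0)$ and $(\alpha_0,\alpha_0+\epsilon)$. First I would differentiate (2.8)--(2.9), obtaining
$$
t_\alpha=-\frac{1}{\Lambda_+(\alpha)-\Lambda_-(\alpha)},\quad t_\beta=\frac{1}{\Lambda_+(\beta)-\Lambda_-(\beta)},\quad x_\alpha=\Lambda_-(\alpha)\,t_\alpha,\quad x_\beta=\Lambda_+(\beta)\,t_\beta,
$$
and differentiate the defining relation $\Lambda_-(\alpha)=\Lambda_+(\beta(\alpha))$ to recover $\beta'(\alpha)=\Lambda_-'(\alpha)/\Lambda_+'(\beta)>0$ (Lemma 2.8(A)). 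On the singular curve $\Lambda_-(\alpha)=\Lambda_+(\beta)=:m$, so the relations $x_\alpha=\Lambda_-(\alpha)t_\alpha$ and $x_\beta=\Lambda_+(\beta)t_\beta$ collapse to the key identity
$$
X'(\alpha)=x_\alpha+x_\beta\beta'(\alpha)=m\bigl(t_\alpha+t_\beta\beta'(\alpha)\bigr)=m\,T'(\alpha),
$$
while factoring $T'(\alpha)=t_\alpha+t_\beta\beta'(\alpha)$ and comparing with (2.15) yields $T'(\alpha)=f(\alpha)/\Lambda_+'(\beta(\alpha))$. These two identities carry the whole proof.

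Next I would read off the signs. Since $f(\alpha_0)=0$, $f'(\alpha_0)<0$, and $\alpha_0$ is the unique zero of $f$ on $(\alpha_0-\epsilon,\alpha_0+\epsilon)$ (Lemma 2.6), we get $f>0$ on $\Upsilon_l$ and $f<0$ on $\Upsilon_r$; with $\Lambda_+'(\beta)<0$ this gives $T'<0$ on $\Upsilon_l$ and $T'>0$ on $\Upsilon_r$. The normalization (2.19) gives $\Lambda_-(\alpha_0)=0$, and $\Lambda_-'<0$ gives $m=\Lambda_-(\alpha)>0$ for $\alpha<\alpha_0$ and $m<0$ for $\alpha>\alpha_0$. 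Combining with $X'=mT'$, we find $X'<0$ on both $\Upsilon_l$ and $\Upsilon_r$, with $X'(\alpha_0)=0$ only at the isolated cusp point. Hence $X$ is strictly decreasing on all of $(\alpha_0-\epsilon,\alpha_0+\epsilon)$, so it is a bijection onto an open $x$-interval containing $x_0$, invertible as $\alpha=\alpha(x)$; setting $t(x)=T(\alpha(x))$ exhibits $\Gamma_l\cup\Gamma_r$ as the graph of a single-valued function $t=t(x)$, smooth for $x\ne x_0$ and continuous at $x_0=X(\alpha_0)$, where the two branches meet at $(t_0,x_0)$.

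On this graph $dt/dx=T'/X'=1/m=1/\Lambda_-(\alpha)$; on $\Gamma_l$ where $m>0$ this is positive ($\Gamma_l$ increasing), on $\Gamma_r$ where $m<0$ it is negative ($\Gamma_r$ decreasing). Differentiating once more via $\alpha'(x)=1/X'=1/(\Lambda_-(\alpha)T'(\alpha))$ gives
$$
\frac{d^2t}{dx^2}=-\frac{\Lambda_-'(\alpha)}{\Lambda_-(\alpha)^3\,T'(\alpha)}.
$$
Since $-\Lambda_-'(\alpha)>0$ throughout, on $\Gamma_l$ ($\Lambda_->0$, $T'<0$) and on $\Gamma_r$ ($\Lambda_-<0$, $T'>0$) this quantity is negative, so each branch is concave, as claimed.

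The main subtlety is the junction at the cusp, where the parametrization degenerates ($T'(\alpha_0)=X'(\alpha_0)=0$). I would control it by the leading-order expansions $T(\alpha)-t_0\sim\tfrac12 c_1(\alpha-\alpha_0)^2$ with $c_1=f'(\alpha_0)/\Lambda_+'(\beta_0)>0$ and $X(\alpha)-x_0\sim\tfrac13 c_1\Lambda_-'(\alpha_0)(\alpha-\alpha_0)^3$, which give $t-t_0\sim C|x-x_0|^{2/3}$: the graph is continuous with a vertical tangent at $x_0$, so the two concave branches, increasing to the right and decreasing to the left of $x_0$, fit together into a single connected curve with the cusp at $(t_0,x_0)$, consistent with Lemma 2.7 and Lemma 2.8(B). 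Everything else is a direct sign count once the identities $T'=f/\Lambda_+'$ and $X'=mT'$ are in hand.
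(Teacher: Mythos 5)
Your proof is correct: the identities $T'(\alpha)=f(\alpha)/\Lambda_{+}'(\beta(\alpha))$ and $X'(\alpha)=\Lambda_{-}(\alpha)T'(\alpha)$ on the singular curve both check out against (2.8)--(2.9) and (2.15), and the sign bookkeeping via (H2), (H5), Lemma 2.6 and the normalization (2.19) yields exactly the claimed monotonicity and concavity, consistently with Lemma 2.7, Lemma 2.8, and with (3.12) as used later in Theorem 3.1. The paper itself states this lemma without proof (deferring to \cite{KW}), and your argument is precisely the one its machinery ($\Pi$, $J$, $f$) is set up for; the only caveat is that ``smooth'' must be read branch-wise, since, as you correctly note, the graph $t=t(x)$ behaves like $t_{0}+C|x-x_{0}|^{2/3}$ near the cusp and is merely continuous, with a vertical tangent, at $x_{0}$.
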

Based on the properties derived in Lemmas 2.7-2.9, we can sketch the map from $(\alpha,\beta)$ to $(t,x)$ (see Figure 2).
\begin{figure}[h]
\centering
\includegraphics[trim=0 0 0 0,scale=0.80]{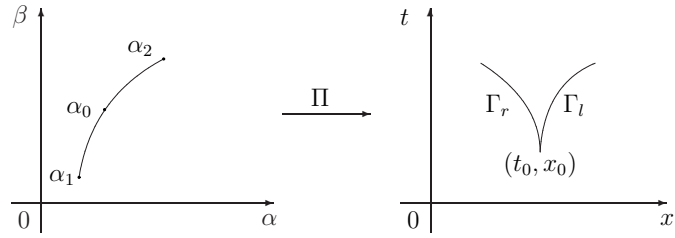}
\caption{ The mapping $\Pi$ under the assumptions (H1)-(H5). }
\end{figure}

\begin{remark}
 Passing through the point $(t_{0},x_{0})$, there exist only two characteristics which intersect the $x$-axis at $\alpha_{0}$ and $\beta_{0}$, respectively. At the point $(t_{0},x_{0})$, it holds that $$\frac{dx}{dt}=\Lambda_{-}(\alpha_{0})=\Lambda_{+}(\beta_{0})=0,$$ that is to say, these two characteristics are tangent at $(t_{0},x_{0})$.
\end{remark}
Lemmas 2.1, 2.3, 2.7 and 2.8 lead to the following main result.
\begin{theorem}
Under assumptions (H1)-(H5), the smooth solution of Cauchy problem (2.3) and (2.5) blows up at $(t_{0},x_{0})$ which is defined by (2.20)-(2.21), and $t_{0}$ is the blowup time. Furthermore, the blowup is geometric blowup.
\end{theorem}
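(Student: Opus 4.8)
The plan is to assemble the structural lemmas into three assertions: (i) a singularity forms at $(t_0,x_0)$, (ii) $t_0$ is exactly the life span, and (iii) the blowup is geometric. First I would locate the blowup point. By Assumption (H3) the pair $(\alpha_0,\beta_0)$ lies in the singular set $\Sigma$, so after the normalization (2.19) we have $\Lambda_{-}(\alpha_0)=\Lambda_{+}(\beta_0)=0$. Lemma 2.5 then guarantees that the characteristic $x=x^{-}(t,\alpha_0)$ (travelling with speed $\lambda_{+}$) meets the characteristic $x=x^{+}(t,\beta_0)$ (travelling with speed $\lambda_{-}$) in finite time, and evaluating the representation formulas (2.8)--(2.9) of Lemma 2.2 at $(\alpha_0,\beta_0)$ shows the meeting point is precisely $(t_0,x_0)$ given by (2.20)--(2.21). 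Since $\lambda_{+}$ is constant along $x^{+}(t,\beta_0)$ by Lemma 2.1, its value at the meeting point equals $\Lambda_{+}(\beta_0)$, so that $\lambda_{+}(t_0,x_0)=\Lambda_{+}(\beta_0)=\Lambda_{-}(\alpha_0)=\lambda_{-}(t_0,x_0)$; this is the tangency recorded in Remark 2.3.

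Next I would produce the actual blowup of the gradient from Lemma 2.3. Specialising (2.11) to $\alpha=\alpha_0$ gives
$$
\frac{\partial\lambda_{-}}{\partial x}\bigg|_{x=x^{-}(t,\alpha_{0})}=\Lambda^{'}_{-}(\alpha_{0})\,\frac{\Lambda_{+}(\alpha_{0})-\Lambda_{-}(\alpha_{0})}{\lambda_{+}(t,x^{-}(t,\alpha_{0}))-\Lambda_{-}(\alpha_{0})}.
$$
The numerator is a fixed nonzero number: $\Lambda^{'}_{-}(\alpha_0)\neq0$ by (H2) and $\Lambda_{+}(\alpha_0)-\Lambda_{-}(\alpha_0)>0$ by (H1). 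For the denominator I would track the $x^{+}$-characteristic label $\beta(t):=\beta(t,x^{-}(t,\alpha_0))$ along the curve $x^{-}(t,\alpha_0)$. Because $\lambda_{+}>\lambda_{-}$ in the region of strict hyperbolicity $D(t_0)$, the curve $x^{-}(t,\alpha_0)$ overtakes the $x^{+}$-characteristics it crosses, so $\beta(t)$ increases monotonically from $\alpha_0$ at $t=0$ to $\beta_0$ at $t=t_0$. By Lemma 2.1, $\lambda_{+}(t,x^{-}(t,\alpha_0))=\Lambda_{+}(\beta(t))$, and (H2) forces this to decrease strictly from $\Lambda_{+}(\alpha_0)$ down to $\Lambda_{+}(\beta_0)=\Lambda_{-}(\alpha_0)$. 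Hence the denominator stays strictly positive for $t<t_0$ and vanishes exactly at $t=t_0$, so $\partial_x\lambda_{-}\to\infty$ as $t\to t_0^{-}$ while no blowup occurs earlier along this characteristic. This proves a singularity forms at $(t_0,x_0)$.

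To see that $t_0$ is the blowup time I would argue that the first singularity coincides with the first vanishing of the Jacobian $J$, i.e. with the earliest point of the singular curve $\beta=\beta(\alpha)$, whose formation time is $t(\alpha,\beta(\alpha))$. By Lemma 2.8, $t_0$ is the unique minimum of this formation time on $(\alpha_0-\epsilon,\alpha_0+\epsilon)$, and by Lemma 2.6 the cusp parameter $\alpha_0$ is the only zero of $f$ there; together with the global $C^{1}$ existence on $D(t_0)$ this identifies $t_0$ as the life span. I expect \emph{this minimality to be the main obstacle}: Lemma 2.8 is only local near $\alpha_0$, so one must rule out an earlier singularity coming from a distant portion of the singular curve. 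I would close this either by invoking the global monotonicity in (H2) to show $t(\alpha,\beta(\alpha))$ admits no competing minimum, or by restricting attention (as the statement implicitly does) to the neighbourhood controlled by Lemmas 2.6 and 2.8; making this global-to-local reduction rigorous is the delicate step.

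Finally I would establish that the blowup is \emph{geometric}. By Lemma 2.1 the Riemann invariants are constant along their respective characteristics and therefore stay bounded by the $C^{2}$ bounds on the initial data; in particular $\lambda_{\pm}$ extend continuously to $(t_0,x_0)$ with $\lambda_{\pm}(t_0,x_0)=0$. Thus the solution itself remains finite while, by the second step, its spatial derivative diverges, which is exactly the mechanism of geometric blowup in the sense of \cite{4}. The geometric character is pinned down by Lemma 2.7(B): the degeneracy of the characteristic map $\Pi$ at the blowup point is a \emph{cusp} rather than a mere fold, so the singularity originates in the overlapping geometry of the two characteristic families and not in any unboundedness of $(\lambda_{-},\lambda_{+})$. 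Collecting (i)--(iii) then yields the theorem.
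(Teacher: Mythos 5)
Your proposal is correct and follows essentially the same route as the paper: the paper gives no detailed proof at all, simply asserting that Lemmas 2.1, 2.3, 2.7 and 2.8 combine to yield the theorem, and your assembly of those lemmas (together with 2.2, 2.5 and 2.6) supplies exactly the details left implicit there -- in particular, the gradient-blowup mechanism you extract from (2.11) is the one the paper records in Remark 2.1, and your bounded-invariants-plus-cusp argument for the geometric character matches Lemma 2.7(B). The local-versus-global minimality issue you honestly flag as the delicate step is likewise unresolved in the paper itself, which simply declares $D(t_{0})$ to be the existence domain with $t_{0}$ the life span and defers the supporting analysis to \cite{KW}.
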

\begin{remark}
Kong \cite{9} presented a similar discussion for a system with genuinely nonlinear characteristic fields.
\end{remark}
 The following lemma states the main phenomena of the concentration of the density $\rho(t,x)$ in the system (1.3).
 \begin{lemma}
 $\Gamma_{l}$ and $\Gamma_{r}$ are two envelopes, $(t_{0},x_{0})$, defined by (2.20) and (2.21) is the blowup point. Moreover, $\rho(t,x)$ tends to the infinity as $(t,x)$ goes to $(t_{0},x_{0})$ with $t\leq t_{0}$.
 \end{lemma}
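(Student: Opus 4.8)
The plan is to reduce the concentration statement to the behavior of the sound speed and then exploit the characteristic coordinates $(\alpha,\beta)$. For the Chaplygin law (1.4) one has $p'(\rho)=\mu^{2}/\rho^{2}$, hence $c=[p'(\rho)]^{1/2}=\mu/\rho$, i.e. $\rho=\mu/c$. Since $\lambda_{+}-\lambda_{-}=2c$ by (2.2), and since Lemma 2.1 together with the definition of the characteristic coordinates gives $\lambda_{-}(t,x)=\Lambda_{-}(\alpha(t,x))$ and $\lambda_{+}(t,x)=\Lambda_{+}(\beta(t,x))$, I obtain the key identity
$$\rho(t,x)=\frac{\mu}{c(t,x)}=\frac{2\mu}{\lambda_{+}(t,x)-\lambda_{-}(t,x)}=\frac{2\mu}{\Lambda_{+}(\beta(t,x))-\Lambda_{-}(\alpha(t,x))}.$$
Because assumption (H1) is preserved in the smooth domain, $\lambda_{+}>\lambda_{-}$ there, so $\rho$ is finite and positive, and $\rho\to\infty$ is \emph{equivalent} to $\Lambda_{+}(\beta)-\Lambda_{-}(\alpha)\to0$, that is, to the coincidence of the two Riemann invariants.

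For the envelope assertion I would argue directly. The envelope of the family $\{x=x^{-}(t,\alpha)\}_{\alpha}$ is the locus where $\partial x^{-}(t,\alpha)/\partial\alpha=0$; by formula (2.10) of Lemma 2.3 this vanishes precisely when $\lambda_{+}(t,x^{-}(t,\alpha))=\Lambda_{-}(\alpha)$, i.e. along the singular curve $\{\Lambda_{-}(\alpha)=\Lambda_{+}(\beta)\}$, and symmetrically for the family $\{x=x^{+}(t,\beta)\}_{\beta}$. This is exactly the set $J(\alpha,\beta)=0$ identified with $\{\Lambda_{-}(\alpha)=\Lambda_{+}(\beta)\}$ in the discussion following Definition 2.1, whose image under $\Pi$ is $\Gamma_{l}\cup\Gamma_{r}$; hence $\Gamma_{l}$ and $\Gamma_{r}$ are the two branches of the envelope of the characteristic fields, with the smoothness, monotonicity and concavity supplied by Lemma 2.9. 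That $(t_{0},x_{0})$ given by (2.20)--(2.21) is the blowup point is Theorem 2.1, and by Lemma 2.7(B) it is $\Pi$ of the cusp point $(\alpha_{0},\beta_{0})$, the tip joining $\Gamma_{l}$ and $\Gamma_{r}$.

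The substantive part is the concentration limit. In the existence domain $D(t_{0})$ the solution is $C^{1}$ and strictly hyperbolic, and by Lemma 2.8 the envelope lies at times $t\ge t_{0}$, so $J\neq0$ on the relevant preimage and $\Pi$ is a local diffeomorphism there; combined with uniqueness of the classical solution, $\Pi$ is a bijection onto $D(t_{0})$ with continuous inverse $(t,x)\mapsto(\alpha(t,x),\beta(t,x))$. I would then show that as $(t,x)\to(t_{0},x_{0})$ with $t\le t_{0}$ the preimage converges to the cusp, $(\alpha(t,x),\beta(t,x))\to(\alpha_{0},\beta_{0})$. Granting this, continuity of $\Lambda_{\pm}$ with the normalization $\Lambda_{-}(\alpha_{0})=\Lambda_{+}(\beta_{0})=0$ from (2.16), (2.19) gives $\Lambda_{+}(\beta)-\Lambda_{-}(\alpha)\to0$, whence $c\to0$ and $\rho\to\infty$ by the identity above.

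The main obstacle is precisely this convergence of the preimage at the cusp, since $(t_{0},x_{0})$ is a cusp value where $\Pi$ is only locally injective and one must single out the branch remaining in $\{t\le t_{0}\}$. I would dispatch it by compactness: for any sequence $(t_{n},x_{n})\to(t_{0},x_{0})$ in $\overline{D(t_{0})}\cap\{t\le t_{0}\}$ the preimages $(\alpha_{n},\beta_{n})$ stay in a compact neighborhood of the singular curve, and any limit point $(\alpha_{*},\beta_{*})$ satisfies $\Pi(\alpha_{*},\beta_{*})=(t_{0},x_{0})$ by continuity of $\Pi$; Lemma 2.6 (uniqueness of the zero of $f$) and Lemma 2.8 (uniqueness of the minimal-time point of the singular curve on $(\alpha_{0}-\epsilon,\alpha_{0}+\epsilon)$) then force $(\alpha_{*},\beta_{*})=(\alpha_{0},\beta_{0})$, so the full limit exists and equals the cusp point, completing the argument.
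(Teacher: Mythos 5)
Your overall strategy is the same as the paper's: the paper's (very terse) proof consists precisely of the identity $\rho=2\mu/(\lambda_{+}-\lambda_{-})$ obtained from (1.4) and (2.2), followed by an appeal to the definition of $\Gamma_{l},\Gamma_{r}$ (as $\Pi$-images of the singular set $\{\Lambda_{-}(\alpha)=\Lambda_{+}(\beta)\}$) and the geometry of Lemmas 2.7--2.9. Your identification of the envelopes via formula (2.10) and of the blowup point via Theorem 2.1 and Lemma 2.7(B) is a faithful, more explicit version of that. The problem is in the final step, which you yourself correctly single out as ``the main obstacle.''

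The deduction ``any limit point $(\alpha_{*},\beta_{*})$ satisfies $\Pi(\alpha_{*},\beta_{*})=(t_{0},x_{0})$; Lemma 2.6 and Lemma 2.8 then force $(\alpha_{*},\beta_{*})=(\alpha_{0},\beta_{0})$'' does not follow. Lemmas 2.6 and 2.8 constrain only points lying \emph{on} the singular curve $\{\Lambda_{-}(\alpha)=\Lambda_{+}(\beta)\}$, and only inside the window $(\alpha_{0}-\epsilon,\alpha_{0}+\epsilon)$; they say nothing about a limit point that is a \emph{regular} point of $\Pi$. Excluding regular limit points is exactly the content of the lemma: if $(\alpha_{*},\beta_{*})$ were regular, then $\Lambda_{+}(\beta_{*})-\Lambda_{-}(\alpha_{*})>0$, and by your own key identity $\rho(t_{n},x_{n})\to 2\mu/\bigl(\Lambda_{+}(\beta_{*})-\Lambda_{-}(\alpha_{*})\bigr)<\infty$ along that subsequence, so the concentration statement would fail. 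Your earlier phrase that the preimages ``stay in a compact neighborhood of the singular curve'' quietly assumes this proximity; the finite-speed-of-propagation bound gives only a compact set, not nearness to the singular curve, so the argument is circular at its crucial point. (Note also that for the density conclusion you do not actually need the limit to be the cusp point: any \emph{singular} limit point gives $\rho\to\infty$; the cusp is only needed to identify $(t_{0},x_{0})$, which Theorem 2.1 already does.)

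To close the gap with tools the paper provides, you can use Remark 2.2 (the remark following Lemma 2.9): through $(t_{0},x_{0})$ there pass only two characteristics, with feet $\alpha_{0}$ and $\beta_{0}$. Since $\lambda_{\pm}$ are bounded, the backward characteristics through $(t_{n},x_{n})$ are uniformly Lipschitz, so by Arzel\`a--Ascoli a subsequence converges to curves joining $(t_{0},x_{0})$ to $(0,\alpha_{*})$ and $(0,\beta_{*})$, and Remark 2.2 identifies $(\alpha_{*},\beta_{*})=(\alpha_{0},\beta_{0})$. Alternatively, one can invoke the local cusp structure of $\Pi$ (Lemma 2.7 and Figure 2), under which points with $t\leq t_{0}$ near $(t_{0},x_{0})$ have their unique preimages in a neighborhood of $(\alpha_{0},\beta_{0})$ shrinking to the cusp. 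Either supplement turns your compactness skeleton into a complete proof.
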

\begin{proof}
The proof can be done by a straight forward calculation.

By (1.4) and (2.2), we have
$$
u(t,x)=\frac{\lambda_{+}(t,x)+\lambda_{-}(t,x)}{2},\qquad \rho(t,x)=\frac{2\mu}{\lambda_{+}(t,x)-\lambda_{-}(t,x)}.
$$
Thus, by the definition of $\Gamma_{l}$, $\Gamma_{r}$ and Lemma 2.8, the lemma is proved directly.
\end{proof}
\begin{remark}
Lemma 2.10 states an important fact: the starting point of the concentration of the state variable $\rho(t,x)$ in system (1.3) is the first blowup point, at which the two characteristics tangent to each other. We can conjecture that the mechanism of the formation of $\delta$-shock wave, which will be constructed in the following, is due to the degeneracy of strict hyperbolicity. Namely, at those points, $\delta$-shock wave forms.
\end{remark}
 The following lemma plays an important role in the discussion of next section.
\begin{lemma}
The solution $(\rho,u)$ of the Cauchy problem (1.3), (2.4) satisfies (1.2) for all $\phi\in C_{0}^{1}((0,t_{0}]\times\mathbb R)$, where the vector function $u$ in (1.2) stands for $(\rho,u)$.
\end{lemma}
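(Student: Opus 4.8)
The plan is to treat $(\rho,u)$ as a bona fide $C^1$ solution of the system (1.3) on the open existence domain $D(t_0)$, whose only singularity is the isolated cusp point $(t_0,x_0)$, and to verify (1.2) by integrating by parts on the time slabs $[0,t_1]$ with $t_1<t_0$ and then letting $t_1\uparrow t_0$. Here the vector unknown in (1.2) is $u=(\rho,\rho u)$ with flux $F(u)=(\rho u,\rho u^2+p)$. Since $\operatorname{supp}\phi$ is a compact subset of $(0,t_0]\times\mathbb R$, it is bounded away from $\{t=0\}$; hence $\phi(0,\cdot)\equiv0$, the term $\int_{t=0}u_0\phi\,dx$ drops out, and it suffices to prove $\iint_{t\ge0}(u\phi_t+F(u)\phi_x)\,dx\,dt=0$.

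First I would fix $t_1\in(0,t_0)$. Since $t_0$ is the life span of the classical solution, $(\rho,u)\in C^1$ and is bounded on the closed slab $[0,t_1]\times\mathbb R$. There the calculation is entirely classical: integrating by parts in $x$ (the boundary terms vanish because $\phi$ has compact $x$-support), then in $t$, and using $u_t+F(u)_x=0$ pointwise, I obtain
\[
\iint_{0\le t\le t_1}\bigl(u\phi_t+F(u)\phi_x\bigr)\,dx\,dt=\int_{\mathbb R}u(t_1,x)\,\phi(t_1,x)\,dx .
\]
Everything now rests on passing to the limit $t_1\uparrow t_0$, which forces me to control $u$ and $F(u)$ near the cusp, where $\rho\to\infty$ by Lemma 2.10.

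The key step, and the one I expect to be the main obstacle, is an integrability estimate obtained through the change of variables $(t,x)=\Pi(\alpha,\beta)$ of Lemma 2.2. Differentiating (2.8)--(2.9) gives $t_\alpha=-1/(\Lambda_+(\alpha)-\Lambda_-(\alpha))$, $t_\beta=1/(\Lambda_+(\beta)-\Lambda_-(\beta))$, $x_\alpha=-\Lambda_-(\alpha)/(\Lambda_+(\alpha)-\Lambda_-(\alpha))$ and $x_\beta=\Lambda_+(\beta)/(\Lambda_+(\beta)-\Lambda_-(\beta))$, whence
\[
J(\alpha,\beta)=-\frac{\Lambda_+(\beta)-\Lambda_-(\alpha)}{\bigl(\Lambda_+(\alpha)-\Lambda_-(\alpha)\bigr)\bigl(\Lambda_+(\beta)-\Lambda_-(\beta)\bigr)} .
\]
Since $\rho=2\mu/(\lambda_+-\lambda_-)=2\mu/(\Lambda_+(\beta)-\Lambda_-(\alpha))$, the singular factor cancels exactly: $\rho\,|J|=2\mu/[(\Lambda_+(\alpha)-\Lambda_-(\alpha))(\Lambda_+(\beta)-\Lambda_-(\beta))]$, which stays bounded near $(\alpha_0,\beta_0)$ by (H1). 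As $u=\tfrac12(\lambda_++\lambda_-)$ is bounded and $p=p_0-\mu^2/\rho\to p_0$, the products $\rho u\,|J|$ and $(\rho u^2+p)|J|$ are bounded too. Pulling the integrals back to the $(\alpha,\beta)$-plane therefore shows $u,F(u)\in L^1_{loc}$ up to and including $t=t_0$ on $\operatorname{supp}\phi$; in particular $\rho(t_0,\cdot)$ is integrable against $\phi$ near $x_0$.

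With this uniform integrability I can invoke dominated convergence. The left-hand side of the slab identity converges to $\iint_{0\le t\le t_0}(u\phi_t+F(u)\phi_x)\,dx\,dt$, which equals $\iint_{t\ge0}(\cdots)$ because $\operatorname{supp}\phi\subset\{t\le t_0\}$; the right-hand side converges to $\int_{\mathbb R}u(t_0,x)\phi(t_0,x)\,dx$. For test functions vanishing on the top slice $t=t_0$ this boundary term is zero, which yields (1.2). The delicate points I anticipate are producing a single dominating function valid uniformly as $t_1\uparrow t_0$ (so that the limit may be taken through the singular time) and justifying the change of variables across the fold set where $\Pi$ degenerates; both reduce to the boundedness of $\rho|J|$ established above, which is really the heart of the argument.
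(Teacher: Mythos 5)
The paper never proves this lemma---it is stated bare between Lemma 2.10 and Corollary 2.1, and it is not among Lemmas 2.4--2.9 imported from the companion paper---so your proposal can only be judged on its own terms and against the use the paper makes of the lemma in Corollary 2.1. The analytic core of your argument is correct and is exactly the right device. Differentiating (2.8)--(2.9) gives the four derivatives you list, hence
\[
J(\alpha,\beta)=-\frac{\Lambda_{+}(\beta)-\Lambda_{-}(\alpha)}{\bigl(\Lambda_{+}(\alpha)-\Lambda_{-}(\alpha)\bigr)\bigl(\Lambda_{+}(\beta)-\Lambda_{-}(\beta)\bigr)},
\]
and since $\lambda_{-}=\Lambda_{-}(\alpha)$ and $\lambda_{+}=\Lambda_{+}(\beta)$ at $\Pi(\alpha,\beta)$, the singular factor in $\rho=2\mu/(\Lambda_{+}(\beta)-\Lambda_{-}(\alpha))$ cancels against $|J|$; with (H1) this gives local boundedness of $\rho|J|$, $\rho u|J|$ and $(\rho u^{2}+p)|J|$. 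Moreover, $\Pi$ is injective on the preimage of $\{t\le t_{0}\}$ and degenerates there only at the single point $(\alpha_{0},\beta_{0})$, so the pullback is legitimate and $(\rho,\rho u)$ and the fluxes are in $L^{1}$ up to $t=t_{0}$ near the cusp. The slab identity $\iint_{0\le t\le t_{1}}(u\phi_{t}+F(u)\phi_{x})\,dxdt=\int_{\mathbb R}u(t_{1},x)\phi(t_{1},x)\,dx$ is also correct.

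The gap is in the last step, and it is not cosmetic. The lemma asserts (1.2) for every $\phi\in C^{1}_{0}((0,t_{0}]\times\mathbb R)$: the bracket is closed at $t_{0}$, so $\phi$ may be nonzero on the top slice, and this is precisely what Corollary 2.1 needs in order to extract information at $t=t_{0}$. Your proof covers only test functions with $\phi(t_{0},\cdot)\equiv0$, and your own slab identity shows that for the remaining $\phi$ the left-hand side of (1.2) equals $\int_{\mathbb R}u(t_{0},x)\phi(t_{0},x)\,dx$, which is in general nonzero: taking $\phi\ge0$ with $\phi(t_{0},\cdot)\not\equiv0$, the first component yields $\int\rho(t_{0},x)\phi(t_{0},x)\,dx>0$. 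So what you establish is strictly weaker than the statement, and in fact your computation shows that (1.2), read literally, cannot hold on the full class; the version that is true, and that the Rankine--Hugoniot discussion at $t=t_{0}$ actually requires, is the identity with the terminal term, $\iint_{0<t\le t_{0}}(u\phi_{t}+F(u)\phi_{x})\,dxdt=\int_{\{t=t_{0}\}}u\phi\,dx$, proved for the whole class. A proof must confront this mismatch rather than silently shrink the class of test functions. A second, smaller omission: passing to the limit in the boundary term $\int u(t_{1},\cdot)\phi(t_{1},\cdot)\,dx$ requires domination of the slice integrals uniformly in $t_{1}$, which the space--time $L^{1}$ estimate alone does not provide; it does follow from your own pullback, because along the slice $t=t_{1}$ one has $dx=(-J/t_{\beta})\,d\alpha$ and hence $\rho\,dx=\frac{2\mu}{\Lambda_{+}(\alpha)-\Lambda_{-}(\alpha)}\,d\alpha$ uniformly in $t_{1}$, but this computation needs to be written into the proof.
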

\begin{cor}
By Lemma 2.11, it is easy to see that the following Rankine-Hugonoit conditions for shock hold at $t=t_{0}$.
$$
\frac{dx(t_{0})}{dt}[\rho]=[\rho u]
\eqno(2.56)
$$
and
$$
\frac{dx(t_{0})}{dt}[\rho u]=[\rho u^{2}+p],
\eqno(2.57)
$$
where $\frac{dx(t_{0})}{dt}=\frac{dx(t)}{dt}|_{t=t_{0}}$ and $(t_{0},x(t_{0}))$ is the starting point of the discontinuity of the curve $x(t)$.
\end{cor}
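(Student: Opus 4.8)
The plan is to read off the jump relations (2.56)--(2.57) from the weak formulation furnished by Lemma 2.11 through the classical Rankine--Hugoniot computation, specialized to the discontinuity issuing from the blowup point $(t_{0},x_{0})$. By Lemma 2.11 the pair $(\rho,\rho u)$ satisfies (1.2) for every $\phi\in C_{0}^{1}((0,t_{0}]\times\mathbb R)$; here (1.2) is understood with $(u,F(u))$ taken to be $(\rho,\rho u)$ for the mass balance and $(\rho u,\rho u^{2}+p)$ for the momentum balance. I would fix the point $(t_{0},x_{0})$, choose $\phi$ supported in a small neighborhood $\Omega$ of it, and split $\Omega$ along the discontinuity $x=x(t)$ into $\Omega^{-}=\Omega\cap\{x<x(t)\}$ and $\Omega^{+}=\Omega\cap\{x>x(t)\}$.

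On the closure of each of $\Omega^{\pm}$ the solution is $C^{1}$ and satisfies (1.3) classically, so that $\rho\phi_{t}+\rho u\,\phi_{x}=\partial_{t}(\rho\phi)+\partial_{x}(\rho u\,\phi)$ there, and likewise $\rho u\,\phi_{t}+(\rho u^{2}+p)\phi_{x}=\partial_{t}(\rho u\,\phi)+\partial_{x}((\rho u^{2}+p)\phi)$. Applying Green's theorem on $\Omega^{-}$ and $\Omega^{+}$ separately, the outer boundary terms drop out because $\phi$ has compact support, and only the line integrals along $x=x(t)$ remain. Using the tangent $(1,\dot x)$ to the discontinuity, keeping careful track of the opposite orientations of the two boundary contributions, and writing $[h]=h_{+}-h_{-}$, the sum of the two pieces produces
$$\int\phi\Bigl(\frac{dx}{dt}[\rho]-[\rho u]\Bigr)\,dt=0,\qquad\int\phi\Bigl(\frac{dx}{dt}[\rho u]-[\rho u^{2}+p]\Bigr)\,dt=0.$$
Since $\phi$ is arbitrary, the integrands must vanish, which are precisely (2.56) and (2.57).

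The delicate point is that, by Lemma 2.10, the density $\rho$ concentrates at $(t_{0},x_{0})$, so the one-sided states entering the jumps are not defined there in the naive pointwise sense. I would overcome this by interpreting $\rho_{\pm}$, $(\rho u)_{\pm}$ and $(\rho u^{2}+p)_{\pm}$ as the traces taken along the two envelopes $\Gamma_{l}$ and $\Gamma_{r}$ meeting at the cusp, carrying out the Green's-theorem computation on the truncated region $\{t\le t_{0}-\eta\}$ for small $\eta>0$ and then letting $\eta\to0$. The smoothness of $\Gamma_{l}$, $\Gamma_{r}$ and of the curve $t=t(x)$ established in Lemma 2.9, together with the representations $u=(\lambda_{+}+\lambda_{-})/2$ and $\rho=2\mu/(\lambda_{+}-\lambda_{-})$ recorded in the proof of Lemma 2.10, is what I would use to guarantee that the slope $dx/dt$ and the relevant one-sided limits behave consistently in this limit, so that the relations (2.56)--(2.57) persist at $t=t_{0}$.
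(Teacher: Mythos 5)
Your reading of the corollary matches the paper's intent: the paper offers no proof beyond the one-line appeal to Lemma 2.11, and the argument it gestures at is exactly the classical derivation of Rankine--Hugoniot relations from a weak formulation via Green's theorem, which is what you set up. The problem is that your mechanism cannot actually run inside the region where Lemma 2.11 is valid, and this is a genuine gap. Lemma 2.11 furnishes the identity (1.2) only for test functions $\phi\in C_{0}^{1}((0,t_{0}]\times\mathbb R)$, i.e.\ supported in $\{t\le t_{0}\}$, whereas the discontinuity $x=x(t)$ and the envelopes $\Gamma_{l}$, $\Gamma_{r}$ live entirely in $\{t\ge t_{0}\}$ (by Lemma 2.8, $t_{0}$ is the minimum of the singular curve, with the cusp at its bottom). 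Consequently, if $\phi$ is admissible for Lemma 2.11, the set along which you split $\Omega$ degenerates to the single point $(t_{0},x_{0})$: in the Green's-theorem computation the interval of integration along the discontinuity is $[t_{1},t_{2}]$ with $t_{1}=t_{2}=t_{0}$, every line integral along $x=x(t)$ vanishes, the solution is continuous across any curve in $\{t<t_{0}\}$, and the computation returns $0=0$ rather than (2.56)--(2.57). If instead you allow $\phi$ to live on a full neighborhood of $(t_{0},x_{0})$, so that $x=x(t)$ has positive length in its support, you need the weak formulation for $t>t_{0}$; Lemma 2.11 does not give it, and obtaining it from the $\delta$-shock construction would be circular, since the proof of Theorem 3.1 invokes precisely this corollary to fix the initial data of the ODEs (3.9)--(3.10). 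Your proposed repair---truncating at $\{t\le t_{0}-\eta\}$ and letting $\eta\to0$---fails for the same reason: on each truncated region the solution is $C^{1}$ and satisfies (1.3) classically, so every Green's identity there is trivially zero, and a limit of trivial identities carries no jump information.

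What the corollary actually asserts, and the way it is used later, is a statement about one-sided limits at the cusp: approaching $(t_{0},x_{0})$ from the two sides, the combinations $\dot x(t_{0})[\rho]-[\rho u]$ and $\dot x(t_{0})[\rho u]-[\rho u^{2}+p]$ vanish. Under the normalization (2.19) one has $\dot x(t_{0})=\Lambda_{-}(\alpha_{0})=\Lambda_{+}(\beta_{0})=0$ (Remark 2.3), so the claim reduces to showing that $[\rho u]$ and $[\rho u^{2}+p]$ vanish in the limit at the cusp. This is genuinely delicate because $\rho_{\pm}\to\infty$ there (Lemma 2.10), so these jumps are formally of the type $\infty-\infty$; one must verify, through the representations $u=(\lambda_{+}+\lambda_{-})/2$ and $\rho u=\mu(\lambda_{+}+\lambda_{-})/(\lambda_{+}-\lambda_{-})$ and the quantitative blowup behavior of $\lambda_{\pm}$ near the cusp established in \cite{KW} (this is what Remark 3.3 alludes to), that the one-sided limits exist and coincide. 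Your final paragraph correctly identifies these ingredients, but they are the entire proof, not a technical patch: the Green's-theorem scaffolding you build around them does no work here.
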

\section{Construction and uniqueness of $\delta$-shock wave solution}
In this section, we will construct the $\delta$-shock wave solution after the blowup time
$t_{0}$ defined by (2.20) and prove the uniqueness of $\delta$-shock wave solution.

In what follows, we denote the cusp point by $O$, where the characteristics of different families are tangent. If the characteristic lines are regarded as the paths of free particles, then they stick at $O$ and form a massive particle in a domain . The concentration of free particles means that the density function becomes a Dirac measure. Thus, the trajectory of this massive particle is just the $\delta$-shock to be discussed below. Mathematically, the formation of $\delta$-shock may result from the overlap of linearly degenerate characteristics, which is just as the formation of the classical shock due to the overlap of genuinely nonlinear characteristic. See Figure 3.
\begin{figure}[h]
\centering
\includegraphics[]{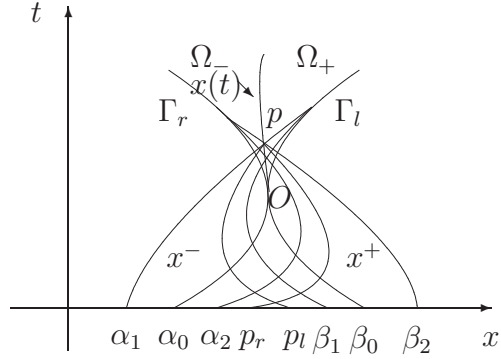}
 \caption{ The overlap of characteristics}
\end{figure}

The definition of $\delta$-measure solution in Section 1 enables us to discuss the piecewise smooth solution of (1.3). Let a smooth discontinuity $\mathbb{S}={(t,x(t))}$, on which $\rho$ becomes a Dirac measure, divide the neighborhood of $O$ into two regions $\Omega_{-}$ and $\Omega_{+}$, see Figure 3. Consider the solution of the form
$$
(\rho,u)(t,x)=\left\{
\begin{array}{l}
(\rho_{-},u_{-})(t,x),\qquad \forall(t,x)\in\Omega_{-},\vspace{2mm}\\
(w\delta_{\mathbb{S}},u_{\delta})(t,x),\quad\;\; \forall(t,x)\in \mathbb{S},\vspace{2mm}\\
(\rho_{+},u_{+})(t,x),\qquad \forall(t,x)\in\Omega_{+},
\end{array}
\right.
\eqno(3.1)
$$
where $\delta$ is the Dirac measure with the support $\mathbb{S}$, $(\rho_{-},u_{-})$ and $(\rho_{+},u_{+})$ are the bounded smooth solution of (1.3) in $\Omega_{-}$ and $\Omega_{+}$, respectively. $\mathbb{S}$ is called a $\delta$-shock, $w(t,x)$ and $u_{\delta}(t,x)$ are its weight and propagation speed, respectively.

Before state our main result, we solve $(\rho_{-},u_{-})(t,x)$ and $(\rho_{+},u_{+})(t,x)$ first by the characteristic method.

Let $\Omega_{1}$ be the region bounded by the $x$-axis, the curve $\widehat{\beta_{0} O}\cup \Gamma_{l}$ and $x^{-}(t,\alpha_{1})$, $\Omega_{2}$ be the region bounded by the $x$-axis, the curve $\widehat{\alpha_{0}O}\cup \Gamma_{r}$ and $x^{+}(t,\beta_{2})$. Here $x^{\pm}$ are defined by (2.6) and (2.7). See Figure 3.

Let
$$
\Omega:=\Omega_{1}\cap\Omega_{2}\cap\{t\geq t_{0}\}
$$

By the method of characteristic, the initial data defined on the interval $[\alpha_{1},\beta_{0}]$ determines two solutions on the domain $\Omega$, since from any point $p\in\Omega$, there exist two slow characteristics and one fast characteristic starting from the initial data defined on the interval $[\alpha_{1},\beta_{0}]$. Moreover, the two solutions is smooth and bounded except on the curve $\Gamma_{l}$. We denote by $(\rho,u)=(\rho_{1},u_{1})(t,x):=\{(\rho_{11},u_{11}),(\rho_{12},u_{12})\}$ and find that
$$
(\rho_{1i},u_{1i})=(\rho_{1i},u_{1i})(t,x)\in C^{1}(\overline{\Omega}\backslash \Gamma_{l})\quad \text{for}\;i=1,2.
\eqno(3.2)
$$

Similarly, the initial data defined on the interval $[\alpha_{0},\beta_{2}]$ determines two solutions on the domain $\Omega$ and the two solutions are smooth and bounded except on the curve $\Gamma_{r}$. We denote the solution by $(\rho,u)=(\rho_{2},u_{2})(t,x):=\{(\rho_{21},u_{21}),(\rho_{22},u_{22})\}$ and observe that
$$
(\rho_{2i},u_{2i})=(\rho_{2i},u_{2i})(t,x)\in C^{1}(\overline{\Omega}\backslash\Gamma_{r})\quad \text{for}\;i=1,2.
\eqno(3.3)
$$

Obviously, on $\Omega$ there exist four solutions. By Figure 3, we can see that from $p\in\Omega$, there are four characteristics coming from the initial data.
\begin{remark}
By (3.2) and (3.3), there must be a gradient catastrophe in the domain $\Omega$. Thus, the aim of $\delta$-shock wave solution is to split the domain $\Omega$ into two parts $\Omega_{-}$ and $\Omega_{+}$ such that it can prevent the characteristics from outgoing. Then  one can choose uniquely $(\rho_{-},u_{-})(t,x)\in(\rho_{1},u_{1})(t,x)$ and $(\rho_{+},u_{+})(t,x)\in(\rho_{2},u_{2})(t,x)$, which satisfy system (1.3) in their corresponding domain. Moreover£¬
$$
(\rho_{-},u_{-})(t,x)\in C^{1}\cap L^{\infty}(\Omega_{-})\quad and\quad (\rho_{+},u_{+})(t,x)\in C^{1}\cap L^{\infty}(\Omega_{+}).
$$
Take the point $p$ for example, on the left of $p$, $(\rho_{-},u_{-})(t,x)$ are determined by the two characteristics $x^{-}(t,\alpha_{1})$ and $x^{+}(t,p_{l})$, while on the right of $p$, $(\rho_{+},u_{+})(t,x)$ are determined by the two characteristics $x^{-}(t,p_{r})$ and $x^{+}(t,\beta_{2})$.
\end{remark}
Now the main theorem of this section can be stated as follows. For simplicity, we assume the blowup point $O$ to be the origin of $(t,x)$-plane.
\begin{theorem}
Under the assumptions (H1)-(H5) on the initial data, There exists a constant $T>0$, such that the Cauchy problem (1.3) and (2.4) admits a unique $\delta$-shock wave type solution on $[0,T)$ and it takes the following form
$$
(\rho,u)(t,x)=\left\{
\begin{array}{l}
(\rho_{-},u_{-})(t,x)\qquad (t,x)\in\Omega_{-}\vspace{2mm}\\
(w\delta_{\mathbb{S}},u_{\delta})(t,x)\quad\;\,\, (t,x)\in \mathbb{S}\vspace{2mm}\\
(\rho_{+},u_{+})(t,x)\qquad (t,x)\in\Omega_{+},
\end{array}
\right.
$$
which satisfies the integral identities (1.8) and (1.9) in the sense of Definition 1.3, where $(\rho_{-},u_{-})(t,x)\in C^{1}\cap L^{\infty}(\Omega_{-})$ and $(\rho_{+},u_{+})(t,x)\in C^{1}\cap L^{\infty}(\Omega_{+})$ satisfy system (1.3), $w$ and $u_{\delta}$ are the weight and propagation speed with initial data $w(0)\neq0$ and $u_{\delta}(0)=0$, respectively, and they satisfy the following generalized Rankine-Hugoniot conditions
$$
\frac{d(w(t)\sqrt{1+u_{\delta}^{2}(t)})}{dt}=u_{\delta}(t)[\rho]-[\rho u]
\eqno(3.4)
$$
$$
\frac{d(w(t)u_{\delta}(t)\sqrt{1+u_{\delta}^{2}(t)})}{dt}=u_{\delta}(t)[\rho u]-[\rho u^{2}+p(\rho)],
\eqno(3.5)
$$
where $
[h]=h_{+}-h_{-}
$ denotes the jump across the discontinuity curve $x(t)$.
Furthermore, the admissibility $\delta$-entropy condition
$$
\lambda_{-}^{r}(t,x(t))<\lambda_{+}^{r}(t,x(t))\leq\frac{dx(t)}{dt}\leq\lambda_{-}^{l}(t,x(t))<\lambda_{+}^{l}(t,x(t)).
\eqno(3.6)
$$
holds in the domain $\Omega$, where $\lambda_{\pm}^{l}$ are the left limits on the discontinuity of the two characteristic fields' speeds and $\lambda_{\pm}^{r}$ are the right limits on the discontinuity of the two characteristic fields' speeds.
\end{theorem}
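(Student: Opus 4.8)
The plan is to prove Theorem 3.1 in four stages: first derive the jump relations (3.4)--(3.5) from the distributional identities (1.8)--(1.9); then recast them as an ODE system whose local solvability produces $w$, $u_\delta$ and the curve $\mathbb S$; then verify the entropy condition (3.6); and finally read the computation of the first stage backwards to confirm that the constructed triple indeed satisfies (1.8)--(1.9). Throughout I use that the two admissible branches $(\rho_-,u_-)$ and $(\rho_+,u_+)$ selected in Remark 3.1 are $C^1$ up to $\mathbb S$ from their respective sides (by (3.2)--(3.3)) and extend smoothly across $\mathbb S$ on all of $\Omega$ away from $\Gamma_l\cup\Gamma_r$; consequently the jumps $[\rho]$, $[\rho u]$, $[\rho u^2+p]$ are $C^1$ functions of position, and this is exactly what decouples the problem from a genuine free-boundary problem into an ODE.

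First I derive (3.4)--(3.5). Inserting the ansatz (3.1) (equivalently (1.6)) into the weak identities, I split each area integral over $\Omega_-$ and $\Omega_+$, apply Green's formula on each piece, and use that $(\rho_\pm,u_\pm)$ solve (1.3) classically there; the area integrals drop out and only line integrals along $\mathbb S$ survive. Parameterizing $\mathbb S$ by $t$ (so $dx=u_\delta\,dt$) and summing the two sides with their opposite orientations gives $\int_0^\infty\phi(t,x(t))\bigl(u_\delta[\rho]-[\rho u]\bigr)\,dt$ for (1.8) and $\int_0^\infty\phi(t,x(t))\bigl(u_\delta[\rho u]-[\rho u^2+p]\bigr)\,dt$ for (1.9). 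For the concentrated part I use Definition 1.2 with the parameterization $(t(s),x(s))=(t,x(t))$, so that $\sqrt{(t')^2+(x')^2}=\sqrt{1+u_\delta^2}$, together with $\phi_t+u_\delta\phi_x=\tfrac{d}{dt}\phi(t,x(t))$; an integration by parts in $t$ then converts the delta terms into $-\int_0^\infty\tfrac{d}{dt}\bigl(w\sqrt{1+u_\delta^2}\bigr)\phi\,dt$ and $-\int_0^\infty\tfrac{d}{dt}\bigl(wu_\delta\sqrt{1+u_\delta^2}\bigr)\phi\,dt$, where by Remark 1.1 the pressure makes no concentrated contribution. Since $\phi\in C_0^\infty$ is arbitrary, matching integrands yields precisely (3.4) and (3.5).

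Next I solve the system. Writing $W:=w\sqrt{1+u_\delta^2}$, equation (3.4) reads $\dot W=u_\delta[\rho]-[\rho u]$, and expanding $\tfrac{d}{dt}(u_\delta W)$ in (3.5) and substituting $\dot W$ gives $\dot u_\delta\,W=2u_\delta[\rho u]-u_\delta^2[\rho]-[\rho u^2+p]$. Together with $\dot x=u_\delta$ this is a first-order system for $(W,u_\delta,x)$ whose right-hand sides are $C^1$, the jumps being evaluated along $(t,x(t))$. The initial data are $u_\delta(0)=0$ (tangency of the two characteristics at $O$, Remark 2.1, consistent with the classical Rankine--Hugoniot relations of Corollary 2.1), $x(0)=0$, and the prescribed weight $W(0)=w(0)\neq0$. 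Because $W(0)\neq0$, the factor $1/W$ in the $u_\delta$-equation is regular near $t=0$, the field is locally Lipschitz, and the Picard--Lindel\"of theorem furnishes a unique solution on a maximal interval $[0,T)$; recovering $w=W/\sqrt{1+u_\delta^2}$ completes the construction. Reversing the first-stage computation shows the resulting $(\rho,u)$ satisfies (1.8)--(1.9), while uniqueness follows from the ODE uniqueness together with the unique branch selection of Remark 3.1.

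Finally I verify the entropy condition (3.6) and isolate the main obstacle. The outer inequalities $\lambda_-^{r}<\lambda_+^{r}$ and $\lambda_-^{l}<\lambda_+^{l}$ are immediate from hyperbolicity ($\lambda_\pm=u\pm c$ with $c>0$). The overcompressiveness $\lambda_+^{r}\le\dot x\le\lambda_-^{l}$, which says both characteristics from each side impinge on $\mathbb S$, I check first at $O$: there $\dot x(0)=u_\delta(0)=0$, the one-sided limits of $\lambda_\pm$ are governed by the characteristics bounding $\Omega_\pm$ through $\alpha_0$ and $\beta_0$, and the monotonicity (H2) forces the required signs; the inequalities then persist for small $t>0$ by continuity. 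I expect the main obstacle to be exactly this last stage: one must show that the curve $\mathbb S$ generated by the ODE stays inside $\Omega$ and keeps separating the two admissible branches consistently (so that $\Omega_-,\Omega_+$ are well defined and the one-sided limits entering the jumps are the intended ones), and that the compressive inequalities hold uniformly on $[0,T)$. Both of these rest on the fine geometric description of the cusp from Lemmas 2.8--2.9 (the monotonicity and concavity of $\Gamma_l,\Gamma_r$) rather than on the ODE theory itself, possibly forcing $T$ to be shrunk.
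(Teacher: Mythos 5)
Your stages 1 and 2 follow essentially the same route as the paper: the paper verifies (1.8)--(1.9) by splitting a ball $D$ centered on $\mathbb S$ into $D_1,D_2$, applying Green's formula, and using (3.4)--(3.5) to kill the boundary terms (your derivation is the same computation read in the other direction); and it likewise converts (3.4)--(3.5) into a first-order ODE system --- its (3.9)--(3.10) for $(\ddot x,\dot w)$ is exactly your $(W,u_\delta,x)$ system --- solved locally by the existence and uniqueness theorem for ODEs, with $w(0)\neq 0$ keeping the $1/W$ (resp.\ $1/w$) factor regular near $t=0$.

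The genuine gap is in your stage 3, and it is precisely the step you wave at with ``the inequalities then persist for small $t>0$ by continuity.'' At the cusp $O$ the middle inequalities of (3.6) are not strict: by Remark 2.1 the two tangent characteristics have slope $\Lambda_-(\alpha_0)=\Lambda_+(\beta_0)=0$, so $\lambda_+^{r}(O)=\dot x(0)=\lambda_-^{l}(O)=0$ all coincide. Continuity propagates strict inequalities, not equalities, so your argument cannot start; this degeneracy is the entire difficulty, and monotonicity (H2) alone does not break it. The paper closes it with second-order information: (i) since $\dot x(0)=0$ and, by Corollary 2.1, the classical Rankine--Hugoniot relations hold at the blowup time, both right-hand sides of (3.4)--(3.5) vanish at $t=0$, which forces $\ddot x(0)=\dot u_\delta(0)=0$ (the paper's (3.11)); (ii) by Lemma 2.9 the envelopes satisfy $\frac{d^2\Gamma_r}{dt^2}\big|_{t=0}<0<\frac{d^2\Gamma_l}{dt^2}\big|_{t=0}$ (its (3.12)). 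The three curves $\Gamma_r$, $x(t)$, $\Gamma_l$ thus share position and slope at $t=0$ but have strictly ordered second derivatives, giving $\frac{d\Gamma_r}{dt}<\dot x(t)<\frac{d\Gamma_l}{dt}$ and hence $\Gamma_r<x(t)<\Gamma_l$ for small $t>0$; since $\Gamma_r,\Gamma_l$ are envelopes of characteristics, the method of characteristics then upgrades this to the full chain $\lambda_+^{r}\le\frac{d\Gamma_r}{dt}\le\dot x\le\frac{d\Gamma_l}{dt}\le\lambda_-^{l}$ (its (3.15)), which simultaneously yields the entropy condition, the containment of $\mathbb S$ in $\Omega$ that you left open, and the uniqueness claim (no outgoing characteristics). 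You correctly identified Lemmas 2.8--2.9 as the relevant ingredients, but the $\ddot x(0)=0$ computation and the second-derivative comparison are the missing steps without which your stage 3 fails.
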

\begin{proof}
We first check that the constructed $\delta$-measure solution in the theorem satisfies Definition 1.3 in the sense of distributions, that is,
$$
I:\qquad<\rho,\phi_{t}>+<\rho u,\phi_{x}>=0
$$
and
$$
II:\qquad<\rho u,\phi_{t}>+<\rho u^{2}+p(\rho),\phi_{x}>=0
$$
for all test functions $\phi(x,t)\in C_{0}^{\infty}(D)$, where $D$ is a ball centered at $Q$ and $Q$ is any point on $\mathbb{S}$. Furthermore, we assume that $D$ intersect $\mathbb{S}$ at $Q_{1}=(t_{1},x(t_{1}))$ and $Q_{2}=(t_{2},x(t_{2}))$. Let $D_{1}$ and $D_{2}$ be the components of $D$ which are determined by $\mathbb{S}$. By direct calculations and Green's formula, we have
$$
\begin{array}{l}
I=<\hat{\rho},\phi_{t}>+<w(t)\delta_{\mathbb{S}},\phi_{t}>+<\hat{\rho}\hat{u},\phi_{x}>+<w(t)u_{\delta}\delta_{\mathbb{S}},\phi_{x}>\vspace{2mm}\\
\;\;\,=\iint_{D}\hat{\rho}\phi_{t}dxdt+\iint_{D}\hat{\rho}\hat{u}\phi_{x}dxdt
+\int_{t_{1}}^{t_{2}}w(t)\phi_{t}(t,x(t))\sqrt{1+\dot{x}^{2}(t)}dt+\vspace{2mm}\\
\;\;\,\quad\int_{t_{1}}^{t_{2}}w(t)u_{\delta}\phi_{x}(t,x(t))\sqrt{1+\dot{x}^{2}(t)}dt\vspace{2mm}\\
\;\;\,=\iint_{D_{1}}\rho_{-}\phi_{t}dxdt+\iint_{D_{2}}\rho_{+}\phi_{t}dxdt+\iint_{D_{1}}\rho_{-}u_{-}\phi_{x}dxdt
+\iint_{D_{2}}\rho_{+}u_{+}\phi_{x}dxdt+\vspace{2mm}\\
\;\;\,\quad\int_{t_{1}}^{t_{2}}w(t)(\phi_{t}+\frac{dx(t)}{dt}\phi_{x})(t,x(t))\sqrt{1+\dot{x}^{2}(t)}dt\vspace{2mm}\\
\;\;\,=\iint_{D_{1}}(\rho_{-}\phi)_{t}dxdt+\iint_{D_{2}}(\rho_{+}\phi)_{t}dxdt
+\iint_{D_{1}}(\rho_{-}u_{-}\phi)_{x}dxdt+\iint_{D_{2}}(\rho_{+}u_{+}\phi)_{x}dxdt-\vspace{2mm}\\
\;\;\,\quad\int_{t_{1}}^{t_{2}}\frac{d(w(t)\sqrt{1+\dot{x}^{^{2}}(t)})}{dt}\phi dt\vspace{2mm}\\
\;\;\,=\iint_{D_{1}}[(\rho_{-}\phi)_{t}+(\rho_{-}u_{-}\phi)_{x}]dxdt+\iint_{D_{2}}[(\rho_{+}\phi)_{t}+(\rho_{+}u_{+}\phi)_{x}]dxdt
-\int_{t_{1}}^{t_{2}}\frac{d(w(t)\sqrt{1+\dot{x}^{^{2}}(t)})}{dt}\phi dt\vspace{2mm}\\
\;\;\,=\oint_{\partial D_{1}}-\rho_{-}\phi dx+\rho_{-}u_{-}\phi dt+\oint_{\partial D_{2}}-\rho_{+}\phi dx+\rho_{+}u_{+}\phi dt-\int_{t_{1}}^{t_{2}}\frac{d(w(t)\sqrt{1+\dot{x}^{^{2}(t)}})}{dt}\phi dt\vspace{2mm}\\
\;\;\,=\int_{t_{1}}^{t_{2}}[(-\rho_{-}+\rho_{+})\frac{dx(t)}{dt}+(\rho_{-}u_{-}-\rho_{+}u_{+})]\phi dt-\int_{t_{1}}^{t_{2}}\frac{d(w(t)\sqrt{1+\dot{x}^{^{2}}(t)})}{dt}\phi dt\vspace{2mm}\\
\;\;\,=0,
\end{array}
$$
where $\partial D_{1,2}$ is the boundaries of $D_{1,2}$ and $\oint_{\partial D_{1,2}}$ is the line integral on boundaries $\partial D_{1,2}$ and we denote $\frac{dx}{dt}$ by $\dot{x}$ for any function $x(t)$ here and hereafter. The last equality comes from (3.4).
Similarly, we have
$$
\begin{array}{l}
II
=\iint_{D}\hat{\rho}\hat{u}\phi_{t}dxdt+\int_{t_{1}}^{t_{2}}w(t)u_{\delta}\phi_{t}(t,x(t))\sqrt{1+\dot{x}^{2}}dt
+\iint_{D}(\hat{\rho}\hat{u}^{2}+\hat{p})\phi_{x}dxdt+\vspace{2mm}\\
\qquad\,\,\int_{t_{1}}^{t_{2}}w(t)u_{\delta}^{2}\phi_{x}(t,x(t))\sqrt{1+\dot{x}^{2}}dt\vspace{2mm}\\
\quad\;=\iint_{D_{1}}(\rho_{-}u_{-}\phi)_{t}dxdt+\iint_{D_{2}}(\rho_{+}u_{+}\phi)_{t}dxdt+
\iint_{D_{1}}\left((\rho_{-}u_{-}^{2}+p(\rho_{-}))\phi\right)_{x}dxdt+\vspace{2mm}\\
\qquad\,\iint_{D_{2}}\left((\rho_{+}u_{+}^{2}+p(\rho_{+}))\phi\right)_{x}dxdt+\int_{t_{1}}^{t_{2}}w(t)u_{\delta}(\phi_{t}+\dot{x}(t)\phi_{x})\sqrt{1+\dot{x}(t)^{2}}dt\vspace{2mm}\\
\quad\;=\int_{t_{1}}^{t_{2}}\left(u_{\delta}(t)[\rho u]-[\rho u^{2}+p(\rho)]\right)\phi dt-\int_{t_{1}}^{t_{2}}\frac{d(w(t)u_{\delta}(t)\sqrt{1+u_{\delta}^{2}(t)})}{dt}\phi dt\vspace{2mm}\\
\quad\;=0.
\end{array}
$$
The last equality is due to (3.5) and $\dot{x}(t)=u_{\delta}(t)$.

For the existence of the $\delta$-shock wave solution, we can solve the system of ODEs (3.4) and (3.5) with their corresponding initial data. By (3.4) and (3.5), we have
$$
\sqrt{1+u_{\delta}^{2}(t)}\dot{w}(t)+\frac{w(t)u_{\delta}(t)\dot{u}_{\delta}(t)}{\sqrt{1+u^{2}_{\delta}(t)}}=u_{\delta}(t)[\rho]-[\rho u]
\eqno(3.7)
$$
and
$$
\dot{w}(t)u_{\delta}(t)\sqrt{1+u_{\delta}^{2}(t)}+w(t)\sqrt{1+u_{\delta}^{2}(t)}\dot{u}_{\delta}(t)+
\frac{w(t)u_{\delta}^{2}(t)\dot{u}_{\delta}(t)}{\sqrt{1+u_{\delta}^{2}(t)}}=u_{\delta}(t)[\rho u]-[\rho u^{2}+p(\rho)].
\eqno(3.8)
$$
By direct calculations, the following system holds
$$
\ddot{x}(t)=\frac{\dot{x}(t)[\rho u]-[\rho u^{2}+p(\rho)]-\dot{x}(t)(\dot{x}(t)[\rho]-[\rho u])}{w(t)\sqrt{1+\dot{x}^{2}(t)}}:=F(t,x(t),\dot{x}(t),w(t))
\eqno(3.9)
$$
and
$$
\dot{w}(t)=-\frac{\dot{x}(t)\left(\dot{x}(t)[\rho u]-[\rho u^{2}+p(\rho)]-\dot{x}(t)(\dot{x}(t)[\rho]-[\rho u])\right)}{(1+\dot{x}^{2}(t))^{\frac{3}{2}}}+\frac{\dot{x}(t)[\rho]-[\rho u]}{\sqrt{1+\dot{x}^{2}(t)}}.
\eqno(3.10)
$$
Then by the existence and uniqueness theorem of the initial value problem of ODEs, we know that there exists a constant $T_{1}>0$ such that the solution of (3.9) and (3.10) exist on $[0,T_{1})$. Since by the above construction $(\rho_{-},u_{-})$ and $(\rho_{+},u_{+})$ are in $C^{1}$, $w(0)\neq0$ and on $t=0$, by Corollary 3.1, we have
$$
\dot{x}(t)[\rho]-[\rho u]=\dot{x}(t)[\rho u]-[\rho u^{2}-p(\rho)]\equiv0.
$$

Next, we show that there exists a constant $T_{2}>0$ such that the discontinuity curve $x(t)$ lies between $\Gamma_{r}$ and $\Gamma_{l}$ under the initial data $x(0)=\dot{x}(0)=0$ and $w(0)\neq0$ for $t\in[0,T_{2})$.
By (3.7), (3.8) and the initial condition $w(0)\neq0\; \text{and}\; u_{\delta}(0)=0$,
we have
$$
\frac{d^{2}x(t)}{dt^{2}}\bigg|_{t=0}=\dot{u}_{\delta}(t)|_{t=0}=0.
\eqno(3.11)
$$
Since on $t=0$, it holds that
$$
u_{\delta}(t)[\rho]-[\rho u]=u_{\delta}(t)[\rho u]-[\rho u^{2}+p(\rho)]=0.
$$
By Lemma 2.9 or Figure 2, we have
$$
\frac{d^{2}\Gamma_{r}}{dt^{2}}\bigg|_{t=0}<0\quad \text{and} \quad \frac{d^{2}\Gamma_{l}}{dt^{2}}\bigg|_{t=0}>0.
\eqno(3.12)
$$
By the initial condition $x(0)=\dot{x}(0)=0$ and (3.11)-(3.12), there exists a small constant $T_{2}$ such that for $t\in[0,T_{2})$, it holds that
$$
\frac{d\Gamma_{r}}{dt}<\frac{dx(t)}{dt}<\frac{d\Gamma_{l}}{dt}.
\eqno(3.13)
$$
Therefore, we have
$$
\Gamma_{r}<x(t)<\Gamma_{l}\quad \rm{for}  \; t\in(0,T_{2}).
\eqno(3.14)
$$
Then by the method of characteristic and (3.13), it is easy to show that
$$
\lambda_{-}^{r}(t,x(t))<\lambda_{+}^{r}(t,x(t))\leq\frac{d\Gamma_{r}}{dt}\leq\frac{dx(t)}{dt}\leq\frac{d\Gamma_{l}}{dt}\leq\lambda_{-}^{l}(t,x(t))<\lambda_{+}^{l}(t,x(t)),
\eqno(3.15)
$$
for $t\in[0,T_{2})$,
which means that all characteristics on both sides of the discontinuity are not outcoming and compatible with $\delta$-entropy condition, which also guarantees the uniqueness of $\delta$-shock wave solution.

Finally, take $T=min\{T_{1},T_{2}\}$, then the theorem holds.
\end{proof}
\begin{definition}
A discontinuity which is presented in the form (3.1) and satisfies (3.4)-(3.6) will be called a delta shock wave solution to system (1.3), symbolized by $\delta$.
\end{definition}
\begin{remark}
The initial data of the weight $w(0)\neq0$ and the initial speed of the discontinuity $\dot{x}(0)=0$ are reasonable. $w(0)\neq0$ and $\dot{x}(0)=0$ are sufficient to keep the discontinuity $x(t)$ lies between $\Gamma_{l}$ and $\Gamma_{r}$ and make sure that the constructed solution satisfies the admissible $\delta$-entropy condition.
\end{remark}
\begin{remark}
Different from other papers on the $\delta$-shock wave solution of hyperbolic conservation laws, we have to deal with a weaker case. In fact, by the above construction of (3.1), at the blowup point $t_{0}$, $\rho_{\pm}=\infty$, which may take some trouble to the uniqueness of the generalized entropy condition. Fortunately, our analysis of the behavior of singularity (see \cite{KW}) helps us avoiding this difficulty.
\end{remark}
\begin{remark}
The Delta-like singularity with point-shape and Delta-like singularity with line-shape in our paper \cite{KW} satisfy the generalized entropy condition in Theorem 3.1.
\end{remark}
\section{Physical meaning}
In this section, we will show the physical meaning of $\delta$-shock wave solution constructed in last section.

Since the density $\rho(t,x)$ contains the Dirac delta function, classical conservation laws do not hold. However, there are generalized conservation laws for system (1.3).

Define
$$
S_{\rho}(t)=\int_{-\infty}^{\infty}\rho(x,t)dx=\int_{-\infty}^{x(t)}\rho(x,t)dx+\int_{x(t)}^{\infty}\rho(x,t)dx
\eqno(4.1)
$$
and
$$
S_{\rho u}(t)=\int_{-\infty}^{\infty}\rho u(x,t)dx=\int_{-\infty}^{x(t)}\rho u(x,t)dx+\int_{x(t)}^{\infty}\rho u(x,t)dx.
\eqno(4.2)
$$
Where $(\rho(x,t),u(x,t))$ are defined by (4.1), $x(t)$ is the discontinuity, which is the support of the $\delta$-shock. Then, we have the following total mass and momentum conservation.
\begin{theorem}
Let $(\rho,u)=(\rho(t,x),u(t,x))$ defined in last section be the $\delta$-shock wave type solution of the Cauchy problem (1.3), (2.4) and $x(t)$ is the discontinuity curve, suppose that $(\rho,u)$ is compactly supported by a constant state (vacuum state is not included here) at infinity with respect to $x$. Then the following balance laws hold:
$$
\dot{S}_{\rho}(t)=-\frac{d\left(w(t)\sqrt{1+u_{\delta}^{2}(t)}\right)}{dt}=-u_{\delta}(t)[\rho]+[\rho u],
\eqno(4.3)
$$
and
$$
\dot{S}_{\rho u}(t)=-\frac{d\left(w(t)u_{\delta}(t)\sqrt{1+u_{\delta}^{2}(t)}\right)}{dt}=-u_{\delta}(t)[\rho u]+[\rho u^{2}+p(\rho)],
\eqno(4.4)
$$
namely
$S_{\rho}(t)+w(t)\sqrt{1+u_{\delta}^{2}(t)}$ and $S_{\rho u}(t)+w(t)u_{\delta}(t)\sqrt{1+u_{\delta}^{2}(t)}$ are conserved. We call them generalized mass and momentum conservation law respectively.
\end{theorem}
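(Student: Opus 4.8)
The plan is to differentiate the two integral quantities $S_\rho(t)$ and $S_{\rho u}(t)$ directly in time, exploit the fact that the smooth pieces $(\rho_-,u_-)$ and $(\rho_+,u_+)$ satisfy the classical conservation laws (1.3) in their respective regions $\Omega_-$ and $\Omega_+$, and then recognize the resulting jump terms as precisely the right-hand sides of the generalized Rankine--Hugoniot conditions (3.4) and (3.5). Since $S_\rho$ and $S_{\rho u}$ are defined in (4.1)--(4.2) as integrals split at the moving discontinuity $x(t)$, the natural tool is the Leibniz rule for differentiation under the integral sign with a moving endpoint; all the smooth quantities are $C^1$ and bounded for $t\in(0,T)$ by the construction in Theorem 3.1, so every step below is legitimate.

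For the mass I would write
\[
\dot S_\rho(t)=\int_{-\infty}^{x(t)}\partial_t\rho_-\,dx+\int_{x(t)}^{\infty}\partial_t\rho_+\,dx+\dot x(t)\big(\rho_-(x(t),t)-\rho_+(x(t),t)\big),
\]
where the last group collects the two endpoint contributions from Leibniz' rule and equals $-u_\delta(t)[\rho]$ after using $\dot x=u_\delta$ and $[\rho]=\rho_+-\rho_-$. In each region I would substitute $\partial_t\rho_\pm=-\partial_x(\rho_\pm u_\pm)$ from the first equation of (1.3) and integrate, so that by the fundamental theorem of calculus the two integrals reduce to the flux values at $x(t)^\mp$ and at $x=\pm\infty$. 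The interior flux values combine into $+[\rho u]$, while the values at $\pm\infty$ drop out under the constant-state hypothesis at infinity. This yields $\dot S_\rho(t)=-u_\delta[\rho]+[\rho u]$, and comparison with (3.4) immediately gives (4.3).

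The momentum identity (4.4) is obtained in exactly the same way, replacing $\rho_\pm$ by $\rho_\pm u_\pm$, using the second equation of (1.3) in the form $\partial_t(\rho u)=-\partial_x(\rho u^2+p)$, and matching the outcome $\dot S_{\rho u}(t)=-u_\delta[\rho u]+[\rho u^2+p(\rho)]$ against (3.5). Adding these two time-derivatives to the corresponding derivatives of the delta-weights $w\sqrt{1+u_\delta^2}$ and $wu_\delta\sqrt{1+u_\delta^2}$ then shows that $S_\rho(t)+w(t)\sqrt{1+u_\delta^2(t)}$ and $S_{\rho u}(t)+w(t)u_\delta(t)\sqrt{1+u_\delta^2(t)}$ have vanishing time-derivative, i.e.\ are conserved, which is the assertion of the theorem.

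The main obstacle is the careful treatment of the contributions at spatial infinity. The finiteness of $S_\rho$, $S_{\rho u}$ and the cancellation of the boundary flux terms $(\rho_-u_-)(-\infty,t)$, $(\rho_+u_+)(+\infty,t)$ (together with their momentum analogues $\rho u^2+p$) must be justified from the hypothesis that $(\rho,u)$ reduces to a single constant state outside a compact $x$-interval, which forces the two outer flux contributions to coincide and hence cancel. This is also where the exclusion of vacuum is essential, since $\rho\to 0$ would make the pressure term $p=p_0-\mu^2/\rho$ in (1.4) degenerate in the flux. I expect no trouble from the arc-length factor $\sqrt{1+u_\delta^2}$: it never appears in the differentiation of the smooth parts and enters only through the already-established conditions (3.4)--(3.5), so the final matching step amounts to quoting those two identities.
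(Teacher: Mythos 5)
Your proposal is correct and follows essentially the same route as the paper's own proof: Leibniz differentiation of (4.1)--(4.2) across the moving discontinuity $x(t)$, substitution of the smooth equations (1.3) in $\Omega_{\pm}$, cancellation of the flux terms at $x=\pm\infty$ via the constant-state hypothesis, and identification of the result with the generalized Rankine--Hugoniot conditions (3.4)--(3.5). No substantive difference to report.
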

\begin{proof}
Using the system (1.3) and differentiating (4.1) with respect to $t$ gives
$$
\begin{array}{l}
\dot{S}_{\rho}(t)=\rho_{-}(x,t)\dot{x}(t)-\rho_{+}(x,t)\dot{x}(t)+\int_{-\infty}^{x(t)}\rho_{t}dx+\int_{x(t)}^{\infty}\rho_{t}dx\vspace{2mm}\\
\qquad\;\,=-\dot{x}(t)[\rho]-\int_{-\infty}^{x(t)}(\rho u)_{x}dx-\int_{x(t)}^{\infty}(\rho u)_{x}dx\vspace{2mm}\\
\qquad\;\,=-\dot{x}(t)[\rho]+[\rho u]+\rho u(-\infty,t)-\rho u(\infty,t)\vspace{2mm}\\
\qquad\;\,=-\dot{x}(t)[\rho]+[\rho u].
\end{array}
$$
The last equality holds since $\rho u(-\infty,t)=\rho u(\infty,t)$ by the compactness of the assumption in the theorem. Thus, the equality (4.3) holds.

(4.4) follows similarly.
\end{proof}
\begin{remark}
In contrast to the classical shock, the $\delta$-shock carries something like mass, momentum or energy and so on.
\end{remark}
\section{some remarks and discussions}
In this paper, we investigate the cusp-type singularity of a linearly degenerate and nonstrict hyperbolic conservation laws under some special assumptions on the initial data. Since singularities in hyperbolic conservation laws are complex and interesting in this research topic, it is necessary to point out that what singularity can form. Hoppe \cite{3} studied the Born-Infeld equation and derived a swallowtail singularity by the self-similar method. Thus, it is important to investigate this phenomenon in mathematical physics especially in the string theory.

Different from the Riemann problem, it requires more careful analysis in the Cauchy problem, especially the behavior of the solution in the neighborhood of the blowup point. For the construction of the nonclassical solution of system (1.3), we can see that the behavior of the neighborhood of the blowup point plays an important role in the study on uniqueness of $\delta$-shock wave solutions.

 Finally, we would like to point out that the solution $(\rho_{-},u_{-})$ and $(\rho_{+},u_{+})$ constructed in the form (3.1) are not bounded everywhere, which in some sense generalizes the definition of $\delta$-shock wave solution.

\vskip 4mm

\noindent{\Large {\bf Acknowledgements.}} The authors thank D. Christodoulou and T. Luo for helpful discussions.
This work was supported in
part by the NNSF of China (Grant No.: 11271323), Zhejiang Provincial Natural Science Foundation of China (Grant No.: Z13A010002) and a National Science and Technology Project during the twelfth five-year plan of China (2012BAI10B04).

\end{document}